\renewcommand{\theequation}                            %counter according section
       {\mbox{\arabic{section}.\arabic{equation}}}
\newcommand{\origsetminus}{} \let\origsetminus=\setminus           % redefine square root
\renewcommand{\setminus}{\!\origsetminus\!}
\theoremstyle{plain}
\newtheorem{definition}{Definition}[section]
\newtheorem{lemma}[definition]{Lemma}
\newtheorem{theorem}[definition]{Theorem}
\newtheorem{corollary}[definition]{Corollary}
\theoremstyle{definition}
\newtheorem{example}[definition]{Example}
\newtheorem{remark}[definition]{Remark}
\renewcommand{\mathbb}{\mathbbm}                     % use mathbbm
\renewcommand{\epsilon}{\varepsilon}                 % AMS symbols
\renewcommand{\phi}{\varphi}
\renewcommand{\theta}{\vartheta}
\renewcommand{\le}{\leqslant}
\renewcommand{\ge}{\geqslant}
\newcommand{\origfoo}{} \let\origfoo=\sqrt           % redefine square root
\renewcommand{\sqrt}[1]{\origfoo{#1}\;}
\newcommand{\abs}[1]{\left\lvert #1 \right\rvert}    % absolut value
\newcommand{\norm}[1]{\left\lVert #1 \right\rVert}   % norm
\DeclareMathOperator{\F}{{\cal F}}                   % sigma-fiel
\DeclareMathOperator{\R}{{\mathbb R}}                % reals
\DeclareMathOperator{\Rp}{{\mathbb R}_+}             % positive reals
\DeclareMathOperator{\C}{{\mathbb C}}                % complexe values
\DeclareMathOperator{\N}{{\mathbb N}}                % integer
\DeclareMathOperator{\Id}{ Id}                        % identity
\DeclareMathOperator{\Cov}{\text{Cov}}
\newcommand{\A}{{\mathcal A}}
\DeclareMathOperator{\Borel}{{\mathcal B}}
\newcommand{\scapro}[2]{\langle #1,#2\rangle}       %Skalarprodukt
\newcommand{\scaproh}[2]{\left[ #1,#2\right]_H}       %Skalarprodukt
\DeclareMathOperator{\Z}{{\mathcal Z}} \DeclareMathOperator{\Cc}{{\mathcal C}}
\DeclareMathOperator{\Var}{\text{Var}}
\newcounter{zahl}
\title{Cylindrical Wiener processes}
\author{ Markus Riedle\footnote{ markus.riedle@manchester.ac.uk}\\
   School of Mathematics\\
   The University of Manchester \\
   M13 9PL, UK
}
\begin{document}
\maketitle

\begin{abstract}
  In this work cylindrical Wiener processes on Banach spaces are defined by means of cylindrical
  stochastic processes, which are a well considered mathematical object. This approach
  allows a definition which is a simple straightforward extension of
  the real-valued situation. We apply this definition to introduce
 a stochastic integral with respect to cylindrical Wiener processes.
Again, this definition is a straightforward extension of the real-valued situation which
results now in simple conditions on the integrand. In particular, we do not have to put
any geometric constraints on the Banach space under consideration. Finally, we
   relate this integral to well-known stochastic integrals in literature.
\end{abstract}

%\keywords{cylindrical processes; cylindrical measures; stochastic
%integral; stochastic differential equations.}
%
%\subjclass{}

\tableofcontents

%\begin{abstract}
%
%\end{abstract}

%%%%%%%%%%%%%%%%%%%%%%%%%%%%%%%%%%%%%%%%%%%%%%%%%%%%%%%%%%%%%%%%%%%%%%%%%%%%

%\section{Vortrag}
%
% Motivation: cylindrical Wiener process:
%   Wiener = klar, etwas mit Wiener
%   process = stochastischer process, dyamische Entwicklung
%   cylindrical = auf Zylinder?

\section{Introduction}

Cylindrical Wiener processes appear in a huge variety of models in
infinite dimensional spaces as a source of random noise or random
perturbation. Almost in the same amount as models with cylindrical
Wiener processes one can find different definitions of cylindrical
Wiener processes in literature. Most of these definitions suffer
from the fact that they do not generalise  comprehensibly the
real-valued definition to the infinite dimensional situation.

In this note cylindrical Wiener processes on a Banach space are
introduced by virtue of the core mathematical object which underlies
all these definitions but which is most often not mentioned: a {\em
cylindrical stochastic process}. A cylindrical stochastic process is
a generalised stochastic process whose distribution at a fixed time
defines only a finite countably additive set function on the Banach
space. These finite countably additive set functions are called {\em
cylindrical measures}. We give a very transparent definition of a
weakly cylindrical Wiener process as a {\em cylindrical stochastic
process which is Wiener}. Our approach has the side-effect that the
appearance of the word {\em cylindrical} is given a reason.

This definition of a weakly cylindrical Wiener process is a
straightforward extension of the real-valued situation but it is
immediately seen to be too general in order to be analytically
tractable.  An obvious request is that the covariance operator of
the associated Gaussian cylindrical measures exists and has the
analogue properties as in the case of ordinary Gaussian measures on
infinite-dimensional spaces. This leads to a second definition of a
strongly cylindrical Wiener process.

For strongly cylindrical Wiener processes we derive a representation
by a series with independent real-valued Wiener processes. On the
other hand, we see, that by such a series a strongly cylindrical
Wiener process can be constructed.

The obvious question when is a cylindrical Wiener process actually a Wiener process in
the ordinary sense can be answered easily thanks to our approach by the self-suggesting
answer: if and only if the underlying cylindrical measure extends to an infinite
countably additive set function, i.e. a measure.

Utilising furthermore the approach by cylindrical measures  we
define a stochastic integral with respect to cylindrical Wiener
processes. Again, this definition is a straightforward extension of
the real-valued situation which results now in simple conditions on
the integrand. In particular, we do not have to put any geometric
constraints on the Banach space under consideration. The cylindrical
approach yields that the distribution of the integral is  a
cylindrical measure. We finish with two corollaries giving
conditions such that the cylindrical distribution of the stochastic
integral extends to a probability measure. These results relate our
integral to other well-known integrals in literature.

To summarise, this article introduces two major ideas:
\begin{itemize}
\item
A cylindrical Wiener process is defined by a straightforward
extension of the real-valued situation and the requirement of having
a nice covariance operator. It can be seen that most of the existing
definitions in literature have the same purpose of guaranteeing the
existence of an analytically tractable covariance operator. Thus,
our definition unifies the existing definitions and respects the
core mathematical object underlying the idea of a cylindrical Wiener
process.
\item
Describing a random dynamic in an infinite dimensional space by an
ordinary stochastic process requires the knowledge that it is a {\em
real} infinite dimensional phenomena, i.e. that there exits a
probability measure on the state space. Whilst describing the
dynamic by a cylindrical stochastic process it is sufficient to know
only the finite dimensional dynamic under the application of all
linear bounded functionals. Our introduced stochastic integral
allows the development of such a theory of cylindrical stochastic
dynamical systems and has the advantage that no constraints are put
on the underlying space.
\end{itemize}

We do not claim that we accomplish very new mathematics in this
work. But the innovation might be seen by relating several
mathematical objects which results in a straightforward definition
of a cylindrical Wiener process and its integral. Even these
relations might be well known to some mathematicians but they do not
seem to be accessible in a written form.

Our work relies on several ingredients from the theory of
cylindrical and ordinary measures on infinite dimensional spaces.
Based on the monographs Bogachev \cite{Bogachev98} and Vakhaniya et
al \cite{Vaketal} we give an introduction to this subject. The
section on $\gamma$-radonifying operators is based on the notes by
Jan van Neerven \cite{JanSeminar}.  Cylindrical Wiener processes in
Banach or Hilbert spaces and their integral are treated for example
in the monographs Da Prato and Zabcyzk \cite{DaPrato92}, Kallianpur
\cite{Kallianpur} and Metivier and Pellaumail \cite{Metivier80}. In
van Gaans \cite{vanGaans} the series representation of the
cylindrical Wiener process is used to define a stochastic integral
in Hilbert spaces and in Berman and Root \cite{BerRo83} an approach
similar to ours is introduced. The fundamental observation in this
work that not every Gaussian cylindrical measure has a nice
covariance operator was pointed out to me the first time by Dave
Applebaum.

\section{Preliminaries}

Throughout this notes let $U$ be a separable
 Banach space with dual $U^\ast$. The dual pairing is denoted by $\scapro{u}{u^\ast}$ for $u\in U$ and
 $u^\ast\in U^\ast$. If $V$ is another Banach space then $L(U,V)$ is the space of all linear, bounded
 operators from $U$ to $V$ equipped with the operator norm $\norm{\cdot}_{U\to V}$.

The Borel $\sigma$-algebra is denoted by  $\Borel(U)$. Let $\Gamma$ be a subset of $U^\ast$. Sets of the form
 \begin{align*}
\Z(u_1^\ast,\dots ,u_n^\ast,B):= \{u\in U:\, (\scapro{u}{u_1^\ast},\cdots,
 \scapro{u}{u_n^\ast})\in B\},
\end{align*}
where $u_1^\ast,\dots, u_n^\ast\in \Gamma$ and $B\in \Borel(\R^n)$ are called {\em cylindrical sets } or {\em
cylinder with respect to $(U,\Gamma)$}. The set of all cylindrical sets is denoted by $Z(U,\Gamma)$, which
turns out to be an algebra. The generated $\sigma$-algebra is denoted by $\Cc(U,\Gamma)$ and it is called
{\em cylindrical $\sigma$-algebra with respect to $(U,\Gamma)$}. If $\Gamma=U^\ast$ we write
$\Cc(U):=\Cc(U,\Gamma)$. If $U$ is separable then both the Borel $\Borel(U)$ and the cylindrical
$\sigma$-algebra $\Cc(U)$ coincide.

A function $\mu:\Cc(U)\to [0,\infty]$ is called {\em cylindrical
measure on $\Cc(U)$}, if for each finite subset $\Gamma\subseteq
U^\ast$ the restriction of $\mu$ on the $\sigma$-algebra
$\Cc(U,\Gamma)$ is a measure. A cylindrical measure is called finite
if $\mu(U)<\infty$.

For every function $f:U\to\R$ which is measurable with respect to
$\Cc(U,\Gamma)$ for a finite subset $\Gamma\subseteq U^\ast$ the
integral $\int f(u)\,\mu(du)$ is well defined as a real-valued
Lebesgue integral if it exists. In particular, the characteristic
function $\phi_\mu:U^\ast\to\C$ of a finite cylindrical measure
$\mu$ is defined by
\begin{align*}
 \phi_{\mu}(u^\ast):=\int e^{i\scapro{u}{u^\ast}}\,\mu(du)\qquad\text{for all }u^\ast\in
 U^\ast.
\end{align*}

In contrast to measures on infinite dimensional spaces there is an analogue of  Bochner's Theorem for
cylindrical measures:
\begin{theorem}\label{th.bochner}
A function $\phi:U^\ast\to\C$ is a characteristic function of a
cylindrical measure on $U$ if and only if
\begin{enumerate}
\item[{\rm (a)}] $\phi(0)=0$;
\item[{\rm (b)}] $\phi$ is positive definite;
\item[{\rm (c)}] the restriction of $\phi$ to every finite
dimensional subset $\Gamma\subseteq U^\ast$ is continuous with respect to the norm topology.
\end{enumerate}
\end{theorem}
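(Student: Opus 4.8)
The plan is to prove the two implications separately: the ``only if'' direction is short, while the ``if'' direction carries the weight and proceeds by a Kolmogorov-type reconstruction of $\mu$ from its finite-dimensional marginals. For \emph{necessity}, suppose $\phi=\phi_\mu$. Then (a) is read off from $\phi_\mu(0)=\mu(U)$. For (b), fix $u_1^\ast,\dots,u_m^\ast\in U^\ast$ and $c_1,\dots,c_m\in\C$; since $\mu$ restricts to a genuine measure on $\Cc(U,\{u_1^\ast,\dots,u_m^\ast\})$ and all integrands below are measurable with respect to it,
\[
\sum_{j,k}c_j\overline{c_k}\,\phi(u_j^\ast-u_k^\ast)=\int_U\abs{\textstyle\sum_{j}c_je^{i\scapro{u}{u_j^\ast}}}^{2}\,\mu(du)\ge 0 .
\]
For (c), let $\Gamma\subseteq U^\ast$ be finite dimensional with basis $f_1,\dots,f_n$ and put $\pi_\Gamma\colon U\to\R^n$, $\pi_\Gamma(u)=(\scapro{u}{f_1},\dots,\scapro{u}{f_n})$; the image $\mu\circ\pi_\Gamma^{-1}$ is a finite Borel measure on $\R^n$ whose characteristic function is $t\mapsto\phi(\sum_i t_if_i)$, hence continuous, and therefore $\phi$ restricted to $\Gamma$ is continuous.

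For \emph{sufficiency}, assume $\phi$ satisfies (a)--(c). For every finite tuple $\Gamma=(f_1,\dots,f_n)$ in $U^\ast$ set $\psi_\Gamma\colon\R^n\to\C$, $\psi_\Gamma(t)=\phi(\sum_i t_if_i)$. Properties (b), (c) pass to $\psi_\Gamma$ (positive definiteness, since the arguments $t^{(j)}-t^{(k)}$ are carried to $\sum_i(t^{(j)}_i-t^{(k)}_i)f_i$ in $U^\ast$; continuity, since $t\mapsto\sum_i t_if_i$ is continuous into a finite-dimensional subspace of $U^\ast$), so the classical Bochner theorem on $\R^n$ produces a unique finite Borel measure $\mu_\Gamma$ on $\R^n$ (of total mass $\phi(0)$) with $\widehat{\mu_\Gamma}=\psi_\Gamma$. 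I would then \emph{define} $\mu\bigl(\Z(f_1,\dots,f_n,B)\bigr):=\mu_\Gamma(B)$ and prove two facts: (i) \emph{consistency} --- if $\Gamma'$ arises from $\Gamma$ by appending or permuting functionals and $p$ is the corresponding coordinate map, then $\widehat{\mu_{\Gamma'}\circ p^{-1}}=\widehat{\mu_\Gamma}$, hence $\mu_{\Gamma'}\circ p^{-1}=\mu_\Gamma$ by uniqueness in Bochner's theorem; (ii) \emph{$\mu_\Gamma$ is carried by $R_\Gamma:=\pi_\Gamma(U)$} --- if $\sum_i a_if_i=0$ in $U^\ast$, then the law of $t\mapsto\sum_i a_it_i$ under $\mu_\Gamma$ has characteristic function $\lambda\mapsto\psi_\Gamma(\lambda a)=\phi(0)$, a constant, so this law is concentrated at $0$; running over a basis of the space of linear relations shows $\mu_\Gamma$ is supported on the subspace $R_\Gamma$.

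Granting (i) and (ii), well-definedness follows: if two cylinders coincide, write them over a common tuple $\Gamma'$ as $\pi_{\Gamma'}^{-1}(B)=\pi_{\Gamma'}^{-1}(C)$; since $\pi_{\Gamma'}\colon U\to R_{\Gamma'}$ is onto, this gives $B\cap R_{\Gamma'}=C\cap R_{\Gamma'}$, and as $\mu_{\Gamma'}$ has full mass on $R_{\Gamma'}$ we get $\mu_{\Gamma'}(B)=\mu_{\Gamma'}(C)$, while (i) identifies both sides with the purported values of $\mu$. For fixed finite $\Gamma\subseteq U^\ast$ one has $\Cc(U,\Gamma)=\{\pi_\Gamma^{-1}(B):B\in\Borel(\R^n)\}$, on which $\pi_\Gamma^{-1}(B)\mapsto\mu_\Gamma(B)$ is --- again by (ii) --- a well-defined, countably additive set function coinciding with $\mu$; thus $\mu$ is a cylindrical measure. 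Finally $\phi_\mu(u^\ast)=\int_\R e^{it}\,(\mu\circ\pi_{(u^\ast)}^{-1})(dt)=\psi_{(u^\ast)}(1)=\phi(u^\ast)$, as required.

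The one genuinely delicate step is the well-definedness of $\mu$ on cylinder sets: a cylinder has many representations, and because the evaluation map $\pi_\Gamma$ is generally \emph{not} surjective, equality of two cylinders does not yield equality of the Borel sets defining them --- they may disagree off $R_\Gamma$. Fact (ii) is precisely the remedy: each $\mu_\Gamma$ lives on $R_\Gamma$, the annihilator in $\R^n$ of the linear relations among $f_1,\dots,f_n$, which renders the off-range discrepancy invisible to the measure. Fact (i), resting on the uniqueness half of the finite-dimensional Bochner theorem, is then routine bookkeeping.
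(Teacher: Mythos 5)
The paper does not prove this theorem at all: it is stated in the Preliminaries as a background fact imported from the literature (Vakhaniya et al., Bogachev), so there is no in-paper argument to compare against. Judged on its own, your proof is correct and is the standard one. Necessity is the usual computation carried out inside the genuine finite measure $\mu|_{\Cc(U,\Gamma)}$ for finite $\Gamma$, and sufficiency is a Kolmogorov-type reconstruction from the finite-dimensional Bochner theorem. You have correctly identified the only delicate point, namely well-definedness of $\mu$ on cylinders when the evaluation map $\pi_\Gamma$ is not surjective, and your fact (ii) --- that $\mu_\Gamma$ is carried by the range $R_\Gamma=\pi_\Gamma(U)$, proved by observing that a linear relation $\sum_i a_if_i=0$ forces the image law of $t\mapsto\scapro{a}{t}$ to have constant characteristic function and hence to be concentrated at the origin --- is exactly the right remedy; combined with the uniqueness half of Bochner's theorem for consistency, it closes the argument. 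Two small remarks. First, condition (a) as printed, $\phi(0)=0$, is evidently a typo for $\phi(0)=1$ (or at least $\phi(0)=\mu(U)<\infty$); you silently adopt the correct reading when you assign $\mu_\Gamma$ total mass $\phi(0)$, but this is worth flagging explicitly. Second, the paper's definition takes a cylindrical measure to be defined on all of $\Cc(U)$, whereas your construction produces $\mu$ only on the algebra $Z(U,U^\ast)=\bigcup\{\Cc(U,\Gamma):\Gamma\subseteq U^\ast\text{ finite}\}$ of cylinder sets; since the defining condition constrains $\mu$ nowhere else, any extension of the domain suffices, but one sentence saying so would make your construction literally match the stated definition.
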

For a finite set $\{u_1^\ast,\dots, u_n^\ast\}\subseteq U^\ast$ a
cylindrical measure $\mu$ defines by
\begin{align*}
\mu_{u_1^\ast,\dots, u_n^\ast}:\Borel(\R^n)\to [0,\infty],\qquad \mu_{u_1^\ast,\dots,
u_n^\ast}(B):=\mu\big(\left\{u\in U:\, (\scapro{u}{u_1^\ast},\dots,
\scapro{u}{u_n^\ast})\in B\right\}\big)
\end{align*}
a measure on $\Borel(\R^n)$. We call $\mu_{u_1^\ast,\dots,
u_n^\ast}$ {\em the image of the measure $\mu$ under the  mapping
$u\mapsto (\scapro{u}{u_1^\ast},\dots, \scapro{u}{u_n^\ast})$}.
Consequently, we have for the characteristic function
$\phi_{\mu_{u_1^\ast,\dots, u_n^\ast}}$ of
$\mu_{u_1^\ast,\dots,u_n^\ast}$ that
\begin{align*}
\phi_{\mu_{u_1^\ast,\dots, u_n^\ast}}(\beta_1,\dots, \beta_n)
=\phi_\mu(\beta_1u_1^\ast+\dots + \beta_nu_n^\ast)
\end{align*}
for all $\beta_1,\dots , \beta_n\in\R$.

Cylindrical measures are described uniquely by their characteristic
functions and therefore by their one-dimensional distributions
$\mu_{u^\ast}$ for $u^\ast\in U^\ast$.

%If $(S,\S)$,  is a measurable space and $\mu$ a measure on $\S$, then $L^p(S,\S,\mu)$
%defines the Banach space of all measurable, real-valued functions $f$ with
%\begin{align*}
% \int_S \abs{f(s)}^p\,\mu(ds)<\infty.
%\end{align*}
%If further more $V$ is a Banach space then we call a function $f:S\to V$ {\em $\;V$-weakly $L^p(S,\S,\mu)$},
%if the function $s\mapsto \scapro{f(s)}{v^\ast}$ is in $L^p(S,\S,\mu)$ for all $v^\ast\in V^\ast$.

\section{Gaussian cylindrical measures}

A measure $\mu$ on $\Borel(\R)$ is called Gaussian with
mean $m\in\R$ and variance $\sigma^2\ge 0$ if either $\mu=\delta_m$ and $\sigma^2=0$ or
it has the density
\begin{align*}
 f:\R\to\Rp,\qquad f(s)=\tfrac{1}{\sqrt{2\pi\sigma^2}} \exp\left(-\tfrac{1}{2\sigma^2}(s-m)^2\right).
\end{align*}
In case of a multidimensional or an infinite dimensional space $U$ a
measure $\mu$ on $\Borel(U)$ is called Gaussian if the image
measures $\mu_{u^\ast}$ are Gaussian for all $u^\ast\in U^\ast$.
Gaussian cylindrical measures are defined analogously but due to
some reasons explained below we have to distinguish between two
cases: weakly and strongly Gaussian.
\begin{definition}
A cylindrical measure $\mu$ on $\Cc(U)$ is called  {\em weakly Gaussian} if $\mu_{u^\ast}$ is Gaussian on
$\Borel(\R)$ for every $u^\ast\in U^\ast$.
\end{definition}

Because of well-known properties of Gaussian measures in finite dimensional Euclidean spaces a cylindrical measure
$\mu$ is weakly Gaussian if and only if $\mu_{u_1^\ast, \dots, u_n^\ast}$ is a Gaussian measure on $\Borel(\R^n)$
for all $u_1^\ast, \dots, u^\ast_n\in U^\ast$ and all $n\in\N$.

\begin{theorem}\label{th.Gausschar}
Let $\mu$ be a weakly Gaussian cylindrical measure on $\Cc(U)$. Then its characteristic function $\phi_{\mu}$ is of the
form
\begin{align}\label{eq.charGauss}
\phi_{\mu}:U^\ast \to\C,\qquad \phi_{\mu}(u^\ast) =\exp\left( i m(u^\ast) -\tfrac{1}{2} s(u^\ast)\right),
\end{align}
where the functions $m:U^\ast \to \R$ and $ s:U^\ast \to \Rp$ are given by
\begin{align*}
 m(u^\ast)=\int_{U} \scapro{u}{u^\ast}\,\mu(du), \qquad s(u^\ast)=\int_{U} \scapro{u}{u^\ast}^2 \mu(du) - (m(u^\ast))^2.
\end{align*}
Conversely, if $\mu$ is a cylindrical measure  with characteristic function of the form
\begin{align*}
\phi_{\mu}:U^\ast \to\C,\qquad \phi_{\mu}(u^\ast) =\exp\left( i m(u^\ast) -\tfrac{1}{2} s(u^\ast)\right),
\end{align*}
for a linear functional $m:U^\ast \to \R$ and a quadratic form $ s:U^\ast \to \Rp$, then
$\mu$ is a weakly  Gaussian cylindrical measure.
\end{theorem}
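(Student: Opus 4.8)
The plan is to reduce both assertions to the one-dimensional situation via the image measures $\mu_{u^\ast}$, exploiting the identity $\phi_{\mu_{u^\ast}}(\beta)=\phi_\mu(\beta u^\ast)$ for $\beta\in\R$ recorded above (the case $n=1$ of $\phi_{\mu_{u_1^\ast,\dots,u_n^\ast}}(\beta_1,\dots,\beta_n)=\phi_\mu(\beta_1u_1^\ast+\dots+\beta_nu_n^\ast)$), together with the classical facts that a Gaussian measure on $\Borel(\R)$ with mean $a$ and variance $\sigma^2$ is a probability measure with characteristic function $\beta\mapsto\exp(ia\beta-\tfrac12\sigma^2\beta^2)$, and that characteristic functions determine measures on $\R$ uniquely.

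For the direct statement, fix $u^\ast\in U^\ast$. By hypothesis $\mu_{u^\ast}$ is Gaussian on $\Borel(\R)$, hence a probability measure with finite first and second moments; write $a(u^\ast)$ for its mean and $\sigma^2(u^\ast)\ge 0$ for its variance. Taking $B=\R$ gives $\mu(U)=\mu_{u^\ast}(\R)=1$, so $\mu$ is finite and $\phi_\mu$ is defined. Since $\mu_{u^\ast}$ is the image of $\mu$ under $u\mapsto\scapro{u}{u^\ast}$, the change-of-variables formula yields
\begin{align*}
\int_U\scapro{u}{u^\ast}\,\mu(du)=\int_\R s\,\mu_{u^\ast}(ds)=a(u^\ast),\qquad \int_U\scapro{u}{u^\ast}^2\,\mu(du)=\int_\R s^2\,\mu_{u^\ast}(ds)<\infty,
\end{align*}
so the integrals defining $m(u^\ast)$ and $s(u^\ast)$ exist and equal $a(u^\ast)$ and $\sigma^2(u^\ast)$ respectively. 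Hence $\phi_\mu(u^\ast)=\phi_{\mu_{u^\ast}}(1)=\exp\big(ia(u^\ast)-\tfrac12\sigma^2(u^\ast)\big)=\exp\big(im(u^\ast)-\tfrac12 s(u^\ast)\big)$, which is \eqref{eq.charGauss}.

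For the converse, let $\mu$ be a cylindrical measure whose characteristic function has the stated form, with $m:U^\ast\to\R$ linear and $s:U^\ast\to\Rp$ a quadratic form; note $m(0)=0$ and $s(0)=0$, so $\phi_\mu(0)=1$, i.e.\ $\mu(U)=1$ and each $\mu_{u^\ast}$ is a probability measure on $\R$. Fix $u^\ast\in U^\ast$. Using linearity of $m$ and the identity $s(\beta u^\ast)=\beta^2 s(u^\ast)$ valid for a quadratic form, we obtain for every $\beta\in\R$
\begin{align*}
\phi_{\mu_{u^\ast}}(\beta)=\phi_\mu(\beta u^\ast)=\exp\big(i\beta m(u^\ast)-\tfrac12\beta^2 s(u^\ast)\big).
\end{align*}
The right-hand side is the characteristic function of the Gaussian measure on $\Borel(\R)$ with mean $m(u^\ast)$ and variance $s(u^\ast)\ge 0$ (to be read as $\delta_{m(u^\ast)}$ when $s(u^\ast)=0$); by uniqueness of characteristic functions on $\R$, $\mu_{u^\ast}$ coincides with this measure and is therefore Gaussian. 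As $u^\ast\in U^\ast$ was arbitrary, $\mu$ is weakly Gaussian.

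I do not anticipate a genuine difficulty: both directions amount to bookkeeping between $\phi_\mu$ and the one-dimensional push-forwards $\mu_{u^\ast}$. The only point requiring a word of care is the finiteness of the moments $m(u^\ast)$ and $s(u^\ast)$ in the direct statement — this is exactly why one restricts to (weakly) Gaussian cylindrical measures rather than arbitrary ones, and it is supplied by the change-of-variables formula for the image measure together with the finiteness of second moments of one-dimensional Gaussians.
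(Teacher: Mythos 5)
Your proposal is correct. Note that the paper itself gives no argument for this theorem: it simply defers to \cite[Prop.~IV.2.7]{Vaketal}, so there is no internal proof to compare against. Your reduction to the one-dimensional image measures $\mu_{u^\ast}$ via the identity $\phi_{\mu_{u^\ast}}(\beta)=\phi_\mu(\beta u^\ast)$ is exactly the standard argument underlying that citation, and you handle the two points that actually need care: the existence of the moment integrals defining $m$ and $s$ in the direct implication (supplied by the change-of-variables formula and finiteness of Gaussian second moments), and the degenerate case $s(u^\ast)=0$ in the converse, where $\mu_{u^\ast}=\delta_{m(u^\ast)}$ is still Gaussian in the paper's sense. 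The only cosmetic remark is that the direct statement, as phrased in the paper, asserts only the integral formulas for $m$ and $s$, which you prove; the linearity of $m$ and the quadratic-form property of $s$ are hypotheses only in the converse, so your proof matches the statement precisely.
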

\begin{proof}
  Follows from \cite[Prop.IV.2.7]{Vaketal}, see also \cite[p.393]{Vaketal}.
\end{proof}

\begin{example}
 Let $H$ be a separable Hilbert space. Then the function
\begin{align*}
  \phi:H\to\C,\qquad \phi(u)=\exp(-\tfrac{1}{2}\norm{u}^2_H)
\end{align*}
satisfies the condition of Theorem \ref{th.Gausschar} and therefore there exists a weakly Gaussian
cylindrical measure $\gamma$ with characteristic function $\phi$. We call this cylindrical measure {\em
standard Gaussian cylindrical measure on $H$}. If $H$ is infinite dimensional the cylindrical measure
$\gamma$ is not a measure, see \cite[Cor.2.3.2]{Bogachev98}.

Note, that this example might be not applicable for a Banach space
$U$ because then $x\mapsto \norm{x}^2_U$ need not to be a quadratic
form.
\end{example}

For a weakly Gaussian cylindrical measure $\mu$   one defines for $u^\ast,v^\ast\in U^\ast$:
\begin{align*}
  r(u^\ast,v^\ast):=\int_U \scapro{u}{u^\ast}\scapro{u}{v^\ast}\,\mu(du) -
   \int_U \scapro{u}{u^\ast}\,\mu(du) \int_U \scapro{u}{v^\ast}\,\mu(du).
\end{align*}
These integrals exist as $\mu$ is a Gaussian measure on the cylindrical $\sigma$-algebra generated by
$u^\ast$ and $v^\ast$. One defines the {\em covariance operator $Q$ of $\mu$} by
\begin{align*}
  Q:U^\ast\to (U^\ast)^\prime,\qquad (Qu^\ast)v^\ast:=r(u^\ast,v^\ast) \qquad\text{for all }v^\ast\in
  U^\ast,
\end{align*}
where $(U^\ast)^\prime$ denotes the algebraic dual of $U^\ast$, i.e. all linear but not necessarily
continuous functionals on $U^\ast$.
 Hence, the characteristic function $\phi_\mu$ of $\mu$ can be written as
\begin{align*}
\phi_{\mu}:U^\ast \to\C,\qquad \phi_{\mu}(u^\ast) =\exp\left( i
m(u^\ast) - (Qu^\ast)u^\ast\right).
\end{align*}
The cylindrical measure $\mu$ is called {\em centered} if $m(u^\ast)=0$ for all $u^\ast\in U^\ast$.

If $\mu$ is a Gaussian measure or more general, a measure of weak order $2$, i.e.
\begin{align*}
  \int_{U}\abs{\scapro{u}{u^\ast}}^2\, \mu(du)<\infty\qquad\text{for all }u^\ast\in U^\ast,
\end{align*}
then the covariance operator $Q$ is defined in the same way as
above. However, in this case it turns out that $Qu^\ast$ is not only
continuous and thus in $U^{\ast\ast}$ but even in $U$ considered as
a subspace of $U^{\ast\ast}$, see \cite[Thm.III.2.1]{Vaketal}. This
is basically due to properties of the Pettis integral in Banach
spaces. For cylindrical measures we have to distinguish this
property and define:
\begin{definition}
A centred weakly Gaussian cylindrical measure $\mu$ on $\Cc(U)$ is called {\em strongly Gaussian} if the
covariance operator $Q:U^\ast\to (U^\ast)^\prime$ is $U$-valued.
\end{definition}

Below Example \ref{ex.weaknotstrong} gives an example of a weakly Gaussian cylindrical measure which is not
strongly. This example can be constructed in every infinite dimensional space in particular  in every Hilbert
space.

Strongly Gaussian cylindrical measures exhibit an other very
important property:
\begin{theorem}\label{th.strongGauss}
For a cylindrical measure $\mu$ on $\Cc(U)$ the following are equivalent:
\begin{enumerate}
\item[{\rm (a)}] $\mu$ is a continuous linear image of the standard Gaussian cylindrical
  measure on a Hilbert space;
\item[{\rm (b)}] there exists a symmetric positive operator $Q:U^\ast\to U$ such that
\begin{align*}
\phi_{\mu}(u^\ast)= \exp\left(-\tfrac{1}{2} \scapro{Qu^\ast}{u^\ast}\right) \qquad\text{for all }u^\ast\in
U^\ast.
\end{align*}
\end{enumerate}
\end{theorem}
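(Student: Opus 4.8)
The plan is to prove the two implications separately, using the characterisation of Gaussian cylindrical measures via their characteristic functions (Theorem \ref{th.Gausschar}) together with the behaviour of characteristic functions under continuous linear images.

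\emph{From (a) to (b).} Suppose $\mu = T(\gamma)$, where $\gamma$ is the standard Gaussian cylindrical measure on a separable Hilbert space $H$ and $T \in L(H,U)$; by "continuous linear image" I understand that $\mu_{u^\ast} = \gamma_{T^\ast u^\ast}$ for all $u^\ast \in U^\ast$, i.e. $\mu$ is the cylindrical measure determined by $\phi_\mu(u^\ast) = \phi_\gamma(T^\ast u^\ast)$. Using the explicit form $\phi_\gamma(h) = \exp(-\tfrac12\norm{h}_H^2)$ from the Example, I get $\phi_\mu(u^\ast) = \exp(-\tfrac12\norm{T^\ast u^\ast}_H^2) = \exp(-\tfrac12\scapro{T^\ast u^\ast}{T^\ast u^\ast}_H)$. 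Identifying $H$ with $H^\ast$, set $Q := T T^\ast : U^\ast \to U$. Then $Q$ is bounded, and $\scapro{Qu^\ast}{u^\ast} = \scapro{T^\ast u^\ast}{T^\ast u^\ast}_H \ge 0$, so $Q$ is positive; symmetry $\scapro{Qu^\ast}{v^\ast} = \scapro{T^\ast u^\ast}{T^\ast v^\ast}_H = \scapro{Qv^\ast}{u^\ast}$ is immediate. This gives the representation in (b). One should also note in passing that $\mu$ is indeed a cylindrical measure — this is automatic since it is the image of one, or alternatively one checks conditions (a)–(c) of Theorem \ref{th.bochner} for $\phi_\mu$.

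\emph{From (b) to (a).} Given a symmetric positive $Q \in L(U^\ast,U)$, the function $u^\ast \mapsto \scapro{Qu^\ast}{u^\ast}$ is a nonnegative quadratic form, so by the converse part of Theorem \ref{th.Gausschar} the function $\phi_\mu(u^\ast) = \exp(-\tfrac12\scapro{Qu^\ast}{u^\ast})$ is the characteristic function of a (centred) weakly Gaussian cylindrical measure $\mu$. It remains to exhibit a Hilbert space $H$ and an operator $T \in L(H,U)$ with $\mu = T(\gamma)$. The natural candidate for $H$ is the reproducing-kernel-type space built from $Q$: on the range of $Q$ in $U$ put the inner product $\scapro{Qu^\ast}{Qv^\ast}_0 := \scapro{Qu^\ast}{v^\ast}$ (well-defined and an inner product precisely because $Q$ is symmetric and positive), and let $H_Q$ be the completion. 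The inclusion $H_Q \hookrightarrow U$ need not be bounded on the pre-Hilbert space, so instead one factors $Q = i_Q i_Q^\ast$ through $H_Q$, where $i_Q : H_Q \to U$ is the natural map; then one checks $\phi_\gamma(i_Q^\ast u^\ast) = \exp(-\tfrac12\norm{i_Q^\ast u^\ast}_{H_Q}^2) = \exp(-\tfrac12\scapro{Qu^\ast}{u^\ast}) = \phi_\mu(u^\ast)$, so $\mu = i_Q(\gamma)$ with $T := i_Q$.

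\emph{Main obstacle.} The delicate point is the direction (b)$\Rightarrow$(a): constructing the Hilbert space $H$ and verifying that the factorisation $Q = TT^\ast$ through $H$ exists with $T$ \emph{bounded} into $U$. The algebraic factorisation through the abstract completion $H_Q$ is straightforward, but showing that the resulting map into $U$ is continuous — equivalently, that $H_Q$ embeds continuously into $U$ — uses that $Q$ maps into $U$ with closed graph (or a direct estimate $\abs{\scapro{Qu^\ast}{v^\ast}}^2 \le \scapro{Qu^\ast}{u^\ast}\scapro{Qv^\ast}{v^\ast}$ combined with $\norm{Qu^\ast}_U = \sup_{\norm{v^\ast}\le 1}\abs{\scapro{Qu^\ast}{v^\ast}}$). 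This is exactly the Hilbert-space factorisation lemma for positive operators $U^\ast \to U$, and I expect the proof to invoke it, presumably via a reference to \cite{Vaketal} rather than reproving it.
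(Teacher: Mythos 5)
Your proposal is correct. The paper itself disposes of Theorem \ref{th.strongGauss} by citing \cite[Prop.VI.3.3]{Vaketal}, but the argument you give is exactly the one the paper develops immediately afterwards: the factorisation $Q=i_Qi_Q^\ast$ through the reproducing kernel Hilbert space (including the continuity estimate for $i_Q$ that you correctly flag as the delicate point) is carried out in Section 4, and the characteristic-function computation showing $\mu=\gamma\circ i_Q^{-1}$ is Remark \ref{re.repandimage}; your (a)$\Rightarrow$(b) direction via $Q:=TT^\ast$ is likewise the computation used in the proof of Theorem \ref{th.eqradonifying}.
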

\begin{proof}
  See \cite[Prop.VI.3.3]{Vaketal}.
\end{proof}
Theorem \ref{th.strongGauss} provides an example of a weakly Gaussian cylindrical measure which is not
strongly Gaussian:
\begin{example}\label{ex.weaknotstrong}
For a discontinuous linear functional $f:U^\ast\to\R$ define
\begin{align*}
  \phi:U^\ast \to\C,\qquad \phi(u^\ast) =\exp\left(  - \frac{1}{2}(f(u^\ast))^2\right).
\end{align*}
Then $\phi$ is the characteristic function of a weakly Gaussian
cylindrical measure due to Theorem \ref{th.Gausschar} but this
measure can not be strongly Gaussian by Theorem \ref{th.strongGauss}
because every symmetric positive operator $Q:U^\ast\to U$  is
continuous.
\end{example}

\section{Reproducing kernel Hilbert space}

According to Theorem \ref{th.strongGauss} a centred strongly
Gaussian cylindrical measure is the image of the standard Gaussian
cylindrical measure on a Hilbert space $H$ under an operator $F\in
L(H,U)$. In this section we introduce a possible construction of
this Hilbert space $H$ and the operator $F$.

For this purpose we start with a bounded linear operator $Q:U^\ast\to U$, which is positive,
\begin{align*}
  \scapro{Qu^\ast}{u^\ast}\ge 0\qquad\text{for all }u^\ast\in U^\ast,
\end{align*}
and symmetric,
\begin{align*}
  \scapro{Qu^\ast}{v^\ast}=\scapro{Qv^\ast}{u^\ast}\qquad\text{for all }u^\ast,v^\ast\in U^\ast.
\end{align*}
On the range of $Q$ we define a bilinear form by
\begin{align*}
 [Qu^\ast,Qv^\ast]_{H_Q}:=\scapro{Qu^\ast}{v^\ast}.
\end{align*}
It can be seen easily that this defines an inner product
$[\cdot,\cdot]_{H_Q}$. Thus, the range of $Q$ is a pre-Hilbert space
and we denote  by $H_Q$ the real Hilbert space obtained by its
completion with respect to $[ \cdot , \cdot ]_{H_Q}$. This space
will be called the {\em reproducing kernel Hilbert space associated
with $Q$}.

In the following we collect some properties of the reproducing
kernel Hilbert space and its embedding:
\begin{enumerate}
\item[(a)]
The inclusion mapping from the range of $Q$ into $U$ is continuous
with respect to the inner product $[ \cdot, \cdot ]_{H_Q}$. For, we
have
\begin{align*}
 \norm{Qu^\ast}_{H_Q}^2
 =\abs{\scapro{Qu^\ast}{u^\ast}}\le \norm{Q}_{U^\ast\to
 U}\norm{u^\ast}^2,
\end{align*}
which allows us to conclude
\begin{align*}
  \abs{\scapro{Qu^\ast}{v^\ast}}
 &= \abs{[Qu^\ast,Qv^\ast]_{H_Q}}
 \le \norm{Qu^\ast}_{H_Q}\norm{Qv^\ast}_{H_Q}
 \le \norm{Qu^\ast}_{H_Q}\norm{Q}_{U^\ast\to H_Q}\norm{v^\ast}.
\intertext{Therefore, we end up with}
 \norm{Qu^\ast}&=\sup_{\norm{v^\ast}\le 1}\abs{\scapro{Qu^\ast}{v^\ast}}\le \norm{Q}_{U^\ast\to
 H_Q}\norm{Qu^\ast}_{H_Q}.
\end{align*}
Thus the inclusion mapping is continuous on the range of $Q$ and it
extends to a bounded linear operator $i_Q$ from $H_Q$ into $U$.

\item[(b)] The operator $Q$ enjoys the decomposition
\begin{align*}
   Q=i_Q i_Q^\ast.
\end{align*}
For the proof we define $h_{u^\ast}:=Qu^\ast$ for all $u^\ast\in U^\ast$. Then we have
$i_Q(h_{u^\ast})=Qu^\ast$ and
\begin{align*}
 [h_{u^\ast},h_{v^\ast}]_{H_Q}=\scapro{Qu^\ast}{v^\ast}
 =\scapro{i_Q(h_{u^\ast})}{v^\ast}
 =[h_{u^\ast},i_Q^\ast v^\ast]_{H_Q}.
\end{align*}
Because the range of $Q$ is dense in $H_Q$ we arrive at
\begin{align}\label{eq.hQ}
 h_{v^\ast}&=i_Q^\ast v^\ast \qquad\text{for all }v^\ast\in U^\ast
\intertext{which finally leads to}
 Qv^\ast&=i_Q(h_{v^\ast})=i_Q(i_Q^\ast v^\ast)\qquad\text{for all }v^\ast\in U^\ast.\notag
\end{align}
\item[(c)] By \eqref{eq.hQ} it follows immediately that the range of $i_Q^\ast$ is dense in $H_Q$.
\item[(d)] the inclusion mapping $i_Q$ is injective. For, if $i_Qh=0$ for some $h\in H_Q$ it follows that
\begin{align*}
[h,i_Q^\ast u^\ast]_{H_Q}
 =\scapro{i_Q h}{u^\ast}=0\qquad\text{for all }u^\ast\in U^\ast,
\end{align*}
which results in  $h=0$ because of (c).
\item[(e)] If $U$ is separable then $H_Q$ is also separable.
\end{enumerate}

%\begin{remark}
%In infinite dimensional spaces the reproducing kernel Hilbert space is not the support of a centred Gaussian
%measure. For an $U$-valued centred Gaussian random variable with reproducing kernel Hilbert space $H_Q$ we
%have:
% \begin{enumerate}
%   \item[(a)] If dim$(H_Q)<\infty\,$ then $\, P(X\in H_Q)=1$;
%   \item[(b)] If dim$(H_Q)=\infty\,$ then $\, P(X\in H_Q)=0$;
%   \item[(c)] $P(X\in \bar{H_Q})=1$.
% \end{enumerate}
%\end{remark}

\begin{remark}\label{re.repandimage}
Let $\mu$ be a centred strongly Gaussian cylindrical measure with covariance operator $Q:U^\ast\to U$. Because $Q$ is
positive and symmetric we can associate with $Q$ the reproducing kernel Hilbert space $H_Q$ with the
inclusion mapping $i_Q$ as constructed above. For the image $\gamma\circ i_Q^{-1}$ of the standard
cylindrical measure $\gamma$ on $H_Q$ we calculate
\begin{align*}
\phi_{\gamma\circ i_Q^{-1}}(u^\ast)&=\int_U e^{i\scapro{u}{u^\ast}}\,(\gamma\circ i_Q^{-1})(du)\\
&= \int_{H_Q} e^{i\scapro{h}{i_Q^\ast u^\ast}}\,\gamma(dh)\\
&= \exp\left(-\tfrac{1}{2}\norm{i_Q^\ast u^\ast}_{H_Q}^2\right)\\
&= \exp\left(-\tfrac{1}{2}\scapro{Qu^\ast}{u^\ast}\right).
\end{align*}
Thus, $\mu=\gamma\circ i_{Q}^{-1}$ and we have found one possible Hilbert space and operator satisfying the
condition in Theorem \ref{th.strongGauss}.

But note, that there might exist other Hilbert spaces exhibiting this feature. But the reproducing kernel
Hilbert space is characterised among them by a certain ``minimal property'', see \cite{Bogachev98}.

%On the other hand Theorem \ref{th.strongGauss} implies that there exist an Hilbert space $H_1$ and $T\in
%L(H_1,U)$ such that $\mu=T\circ \gamma$ where $\gamma$ is the standard Gaussian cylindrical measure on $H$.
%One can see easily that $TT^\ast$ is the covariance operator of $\mu$. Therefore, we have $Q=i_Qi_Q^\ast
%=TT^\ast$ and it follows by a minimal property of the reproducing kernel Hilbert space (see
%\cite[Prop.3.14]{JanSeminar}) that there exists $P\in L(H_1,H_Q)$ such that $T=i_Q\circ P$.
\end{remark}

\section{$\gamma$-radonifying operators}

This section follows the notes \cite{JanSeminar}.

Let $Q:U^\ast\to U$ be a positive symmetric operator and $H$ the reproducing kernel
Hilbert space with the inclusion mapping $i_Q:H\to U$.  If $U$ is a Hilbert space then it
is a well known result by Mourier (\cite[Thm. IV.2.4]{Vaketal}) that $Q$ is the
covariance operator of a Gaussian measure on $U$ if and only if $Q$ is nuclear or
equivalently if $i_Q$ is Hilbert-Schmidt. By Remark \ref{re.repandimage} it follows that
the cylindrical measure $\gamma\circ i_Q^{-1}$ extends to a Gaussian measure on
$\Borel(U)$ and $Q$ is the covariance operator of this Gaussian measure.

The following definition generalises this property of $i_Q:H\to U$ to define by $Q:=i_Q
i_Q^\ast$ a covariance operator to the case when $U$ is a Banach space:
\begin{definition}
Let $\gamma$ be the standard Gaussian cylindrical measure on a
separable Hilbert space $H$. A linear bounded operator $F:H\to U$ is
called {\em $\gamma$-radonifying} if the cylindrical measure
$\gamma\circ F^{-1}$ extends to a Gaussian measure on $\Borel(U)$.
\end{definition}

\begin{theorem}\label{th.eqradonifying}
Let $\gamma$ be the standard Gaussian cylindrical measure on a
separable Hilbert space $H$ with orthonormal basis $(e_n)_{n\in\N}$
and let $(G_n)_{n\in\N}$ be a sequence of
 independent standard real normal random variables. For $F\in L(H,U)$ the following are equivalent:
 \begin{enumerate}
 \item[{\rm (a)}] $F$ is $\gamma$-radonifying;
 \item[{\rm (b)}] the operator $FF^\ast:U^\ast\to U$ is the covariance operator of a
 Gaussian measure $\mu$ on $\Borel(U)$;
 \item[{\rm (c)}] the series $\displaystyle \sum_{k=1}^\infty G_k Fe_k$ converges a.s. in
 $U$.
  \item[{\rm (d)}] the series $\displaystyle \sum_{k=1}^\infty G_k Fe_k$ converges  in
 $L^p(\Omega; U)$ for some $p\in [1,\infty)$.
  \item[{\rm (e)}] the series $\displaystyle \sum_{k=1}^\infty G_k Fe_k$ converges  in
 $L^p(\Omega; U)$ for all $p\in [1,\infty)$.
 \end{enumerate}
In this situation we have for every $p\in [1,\infty)$:
\begin{align*}
  \int_U \norm{u}^p\,\mu(du)=E\norm{\sum_{k=1}^\infty G_k Fe_k}^p.
\end{align*}
\end{theorem}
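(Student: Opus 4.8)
The plan is to establish the equivalences by going around a cycle and then treating the $L^p$-statements separately, invoking classical Gaussian theory (the Itô–Nisio theorem and Fernique's theorem) for the hard steps. First I would record the elementary link between (a) and (b): by definition (a) says $\gamma\circ F^{-1}$ extends to a Gaussian measure $\mu$ on $\Borel(U)$, and by Remark \ref{re.repandimage} applied with $Q=FF^\ast$ the characteristic function of $\gamma\circ F^{-1}$ is $u^\ast\mapsto\exp(-\tfrac12\scapro{FF^\ast u^\ast}{u^\ast})$, so the covariance operator of the extension is exactly $FF^\ast$; conversely, any Gaussian measure on $\Borel(U)$ with covariance $FF^\ast$ has this same characteristic function, hence agrees with $\gamma\circ F^{-1}$ on the cylindrical $\sigma$-algebra $\Cc(U)=\Borel(U)$. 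This gives (a)$\Leftrightarrow$(b).

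Next I would connect the series in (c) to these cylindrical measures. Set $S_n:=\sum_{k=1}^n G_kFe_k$, a $U$-valued Gaussian random variable whose law $\mu_n$ has characteristic function
\begin{align*}
\EE\exp(i\scapro{S_n}{u^\ast})=\exp\Big(-\tfrac12\sum_{k=1}^n \scapro{Fe_k}{u^\ast}^2\Big)
=\exp\Big(-\tfrac12\sum_{k=1}^n \scaproh{e_k}{F^\ast u^\ast}^2\Big),
\end{align*}
and note $\sum_{k=1}^\infty\scaproh{e_k}{F^\ast u^\ast}^2=\norm{F^\ast u^\ast}_H^2=\scapro{FF^\ast u^\ast}{u^\ast}$ by Parseval. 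Thus the finite-dimensional distributions of $S_n$ converge to those of the cylindrical measure with characteristic function $\exp(-\tfrac12\scapro{FF^\ast u^\ast}{u^\ast})$. If (c) holds, the a.s.\ limit $S_\infty$ is a genuine $U$-valued random variable whose law extends this cylindrical measure to $\Borel(U)$, giving (a) with $\mu=\mathcal L(S_\infty)$. For the converse (a)$\Rightarrow$(c): given the Gaussian measure $\mu$ on $\Borel(U)$, realise independent $U$-valued random variables and appeal to the Itô–Nisio theorem — since the partial sums $S_n$ are symmetric and their laws converge weakly (their characteristic functions converge pointwise to the characteristic function of $\mu$, which is continuous on finite-dimensional subspaces), the series converges almost surely in $U$. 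This closes the loop (a)$\Leftrightarrow$(b)$\Leftrightarrow$(c).

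It remains to bring in (d) and (e) and the moment formula. The implications (e)$\Rightarrow$(d) and (d)$\Rightarrow$(c)$'$ (convergence in probability, hence a.s.\ along a subsequence, hence a.s.\ by Itô–Nisio again, or directly Lévy-type arguments for sums of independent symmetric terms) are routine. For (c)$\Rightarrow$(e) I would invoke Fernique's theorem: the a.s.\ limit $S_\infty$ is $U$-valued Gaussian, so $\EE\exp(\epsilon\norm{S_\infty}^2)<\infty$ for some $\epsilon>0$, which yields finite $p$-th moments for all $p$; then the partial sums $S_n=\EE[S_\infty\mid\mathcal F_n]$ form a Gaussian martingale bounded in every $L^p$, and martingale convergence (or again Itô–Nisio together with uniform integrability from the exponential bound) gives $S_n\to S_\infty$ in $L^p(\Omega;U)$ for every $p\in[1,\infty)$. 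The final identity $\int_U\norm{u}^p\,\mu(du)=\EE\norm{\sum_{k=1}^\infty G_kFe_k}^p$ is then immediate since $\mu$ is the law of $S_\infty$ and $\norm{S_n}^p\to\norm{S_\infty}^p$ in $L^1$. The main obstacle is the passage from weak/finite-dimensional convergence of the $S_n$ to almost-sure convergence in $U$: this is exactly where the Itô–Nisio theorem is indispensable, and it is the one nontrivial external input; everything else is bookkeeping with characteristic functions and the standard Gaussian integrability estimate of Fernique.
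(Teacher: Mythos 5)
Your proposal is correct, and for the one step the paper actually proves itself --- the equivalence (a)$\Leftrightarrow$(b) --- you use exactly the paper's argument: compute the characteristic function of $\gamma\circ F^{-1}$ as $u^\ast\mapsto\exp(-\tfrac12\scapro{FF^\ast u^\ast}{u^\ast})$ and match covariance operators. The difference is that the paper then simply cites \cite[Prop.4.2]{JanSeminar} for the equivalence with (c), (d), (e) and for the moment identity, whereas you supply a self-contained sketch: It{\^o}--Nisio (in the form that, for sums of independent symmetric $U$-valued summands, a.s.\ convergence is equivalent to pointwise convergence of the characteristic functions to that of a Radon measure) to pass between (a) and (c), and Fernique's theorem plus the closed vector-valued martingale $S_n=\EE[S_\infty\mid\F_n]$ to upgrade a.s.\ convergence to $L^p$-convergence for every $p$, from which the identity $\int_U\norm{u}^p\,\mu(du)=E\norm{S_\infty}^p$ is immediate. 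This is essentially the standard proof one finds in the cited notes, so nothing is lost; what your version buys is that the theorem no longer rests on an external reference, at the cost of importing It{\^o}--Nisio and Fernique as black boxes (the former is in any case already used elsewhere in the paper, in the proof of Theorem \ref{th.Usum}). One small point of care: when you say the laws of $S_n$ ``converge weakly'' because the characteristic functions converge pointwise, be explicit that this implication is itself part of the It{\^o}--Nisio statement for independent symmetric summands --- in infinite dimensions pointwise convergence of characteristic functions does not by itself give weak convergence.
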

\begin{proof}
As in Remark \ref{re.repandimage} we obtain for the characteristic function of $\nu:=\gamma\circ F^{-1}$:
\begin{align*}
  \phi_{\nu}(u^\ast)
= \exp\left(-\tfrac{1}{2} \scapro{FF^\ast u^\ast}{u^\ast}\right)\qquad \text{for all }u^\ast\in U^\ast.
\end{align*}
This establishes the first equivalence between (a) and (b). The
proofs of the remaining part can be found in
\cite[Prop.4.2]{JanSeminar}.
\end{proof}
To show that $\gamma$-radonifying generalise Hilbert-Schmidt operators to Banach spaces we prove the result
by Mourier mentioned already above. Other proofs only relying on Hilbert theory can be found in the
literature.
\begin{corollary}\label{th.gammahilbert}
If $H$ and $U$ are separable Hilbert spaces then the following are
equivalent for $F\in L(H,U)$:
\begin{enumerate}
  \item[{\rm (a)}] $F$ is $\gamma$-radonifying;
  \item[{\rm (b)}] $F$ is Hilbert-Schmidt.
\end{enumerate}
\end{corollary}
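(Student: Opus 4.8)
The plan is to reduce the statement entirely to Theorem~\ref{th.eqradonifying}, specifically to the equivalence of its conditions (a) and (d) with $p=2$, and then to recognise the $L^2$-convergence condition as the Hilbert--Schmidt property. Fix the orthonormal basis $(e_n)_{n\in\N}$ of $H$ and the sequence $(G_n)_{n\in\N}$ of independent standard real normal random variables from Theorem~\ref{th.eqradonifying}, and consider the partial sums $S_n:=\sum_{k=1}^n G_k Fe_k$ as elements of $L^2(\Omega;U)$.

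Since $U$ is a Hilbert space and the $G_k$ are independent with mean $0$ and variance $1$, the random vectors $(G_k Fe_k)_{k\in\N}$ are pairwise orthogonal in $L^2(\Omega;U)$; indeed, for $j\neq k$ one has $E\scapro{G_j Fe_j}{G_k Fe_k}_U=E[G_jG_k]\,\scapro{Fe_j}{Fe_k}_U=0$. Hence, by Pythagoras in $L^2(\Omega;U)$, for $m<n$,
\begin{align*}
 E\norm{S_n-S_m}_U^2=\sum_{k=m+1}^n E[G_k^2]\,\norm{Fe_k}_U^2=\sum_{k=m+1}^n \norm{Fe_k}_U^2 .
\end{align*}
Because $U$ is complete, $L^2(\Omega;U)$ is complete, so $(S_n)_{n\in\N}$ converges in $L^2(\Omega;U)$ if and only if it is Cauchy there, which by the displayed identity happens if and only if $\sum_{k=1}^\infty \norm{Fe_k}_U^2<\infty$. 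By the equivalence of (a) and (d) in Theorem~\ref{th.eqradonifying} (with $p=2$), $F$ is $\gamma$-radonifying if and only if $\sum_{k=1}^\infty \norm{Fe_k}_U^2<\infty$; and since this sum is independent of the chosen orthonormal basis of $H$, its finiteness is by definition exactly the assertion that $F$ is Hilbert--Schmidt. This gives the equivalence of (a) and (b). As a by-product the identity above with $n\to\infty$ reproduces the $p=2$ case of the final formula in Theorem~\ref{th.eqradonifying}, namely $\int_U\norm{u}^2\,\mu(du)=\sum_{k=1}^\infty\norm{Fe_k}_U^2=\norm{F}_{\mathrm{HS}}^2$.

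I do not expect a genuine obstacle here: the substance is already carried by Theorem~\ref{th.eqradonifying}, and the corollary is essentially the identification of condition (d) in the Hilbert-space setting with square-summability of $(\norm{Fe_k}_U)_k$. The only points demanding (routine) care are the orthogonality computation in $L^2(\Omega;U)$, where it is used that $U$ is Hilbert and not merely Banach, the completeness of $L^2(\Omega;U)$, and the standard fact that the Hilbert--Schmidt norm does not depend on the choice of orthonormal basis.
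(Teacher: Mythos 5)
Your proposal is correct and follows essentially the same route as the paper: the paper's proof likewise reduces the corollary to Theorem \ref{th.eqradonifying} via the identity $E\norm{\sum_{k=m}^n G_k Fe_k}^2=\sum_{k=m}^n\norm{Fe_k}^2$, which is exactly your Pythagoras computation in $L^2(\Omega;U)$. You merely spell out the orthogonality argument and the Cauchy-criterion step that the paper leaves implicit.
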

\begin{proof}
Let $(e_k)_{k\in\N}$ be an orthonormal basis of $H$. The equivalence
follows immediately from
\begin{align*}
 E\norm{ \sum_{k=m}^n G_k Fe_k}^2=\sum_{k=m}^n \norm{Fe_k}^2
\end{align*}
for every family $(G_k)_{k\in\N}$ of independent standard normal
random variables.
\end{proof}

In general, the property of being $\gamma$-radonifying is not so easily accessible as Hilbert-Schmidt operators in case
of Hilbert spaces. However, for some specific Banach spaces, as $L^p$ or $l^p$ spaces, the set of all
covariance operators of Gaussian measures  can be also described more precisely, see \cite[Thm.V.5.5 and
Thm.V.5.6]{Vaketal}.

It turns out that the set of all $\gamma$-radonifying operators can be equipped with a norm such that it is a Banach space,
see \cite[Thm. 4.14]{JanSeminar}.

%We finish this section with a result on the space of all $\gamma$-radonifying operators:
%\begin{theorem}\label{th.gammahilbert}
%For a separable Hilbert space $H$ we define the space of $\gamma$-radonifying operators
%\begin{align*}
% \gamma(H,U)&:=\{F\in L(H,U):\,\text{$\gamma$-radonifying}\}
%\intertext{and equip this space with the norm}
% \norm{F}_\gamma&:= \left(\int_U \norm{u}\,\mu(du)\right)^{1/2},\
%\end{align*}
%where $\mu$ is the Gaussian measure with covariance operator $Q=FF^\ast$. Then the space
%$(\gamma(H,U),\,\norm{\cdot}_\gamma)$ is a Banach space.
%\end{theorem}
%\begin{proof}
%  See \cite[Thm. 4.14]{JanSeminar}.
%\end{proof}

\section{Cylindrical stochastic processes}

Let $(\Omega, \A,P)$ be a probability space with a filtration
$\{\F_t\}_{t\ge 0}$.

Similarly to the correspondence between measures and random variables there is an analogue random
object associated to  cylindrical measures:
\begin{definition}\label{de.cylrv}
A {\em cylindrical random variable $X$ in $U$} is a linear map
\begin{align*}
 X:U^\ast \to L^0(\Omega).
\end{align*}
A cylindrical process $X$ in $U$ is a family $(X(t):\,t\ge 0)$ of cylindrical random
variables in $U$.
\end{definition}

The characteristic function of a cylindrical random  variable $X$ is defined by
\begin{align*}
 \phi_X:U^\ast\to\C, \qquad \phi_X(u^\ast)=E[\exp(i Xu^\ast)].
\end{align*}
The concepts of cylindrical measures and cylindrical random variables match perfectly. Because the
characteristic function of a cylindrical random variable is positive-definite and continuous on finite
subspaces there exists a cylindrical measure $\mu$ with the same characteristic function. We call $\mu$ the
{\em cylindrical distribution of $X$}. Vice versa, for every cylindrical measure $\mu$ on $\Cc(U)$ there
exists a probability space $(\Omega,\A,P)$ and a cylindrical random variable $X:U^\ast\to L^0(\Omega)$
such that  $\mu$ is the cylindrical distribution of $X$, see \cite[VI.3.2]{Vaketal}.

\begin{example}
 A cylindrical random variable $X:U^\ast \to L^0(\Omega)$ is called weakly Gaussian, if
 $Xu^\ast$ is Gaussian for all $u^\ast\in U^\ast$. Thus, $X$ defines a weakly Gaussian cylindrical measure
 $\mu$ on $\Cc(U)$. The characteristic function of $X$ coincide with the one of $\mu$ and is of the form
\begin{align*}
  \phi_X(u^\ast)=\exp(im(u^\ast)-\tfrac{1}{2}s(u^\ast))
\end{align*}
with $m:U^\ast\to \R$ linear and $s:U^\ast\to \Rp$ a quadratic form. If $X$ is strongly Gaussian there exists
a covariance operator $Q:U^\ast\to U$ such that
\begin{align*}
  \phi_X(u^\ast)=\exp(im(u^\ast)-\tfrac{1}{2}\scapro{Qu^\ast}{u^\ast}).
\end{align*}
Because $\phi_X(u^\ast)=\phi_{Xu^\ast}(1)$ it follows
\begin{align*}
  E[Xu^\ast]=m(u^\ast)\qquad\text{and}\qquad
  \Var[Xu^\ast]=\scapro{Qu^\ast}{u^\ast}.
\end{align*}
In the same way by comparing the characteristic function
\begin{align*}
\phi_{Xu^\ast,Xv^\ast}(\beta_1,\beta_2)
=E\left[\exp\left(i(\beta_1Xu^\ast +\beta_2Xv^\ast)\right)\right]
=E\left[\exp\left(i(X(\beta_1u^\ast +\beta_2v^\ast))\right)\right]
\end{align*}
for $\beta_1,\beta_2\in\R$ with the characteristic function of the
two-dimensional Gaussian vector $(Xu^\ast,Xv^\ast)$ we may conclude
\begin{align*}
 \text{Cov}[Xu^\ast, Xv^\ast]=\scapro{Qu^\ast}{v^\ast}.
\end{align*}
Let $H_Q$ denote the  reproducing kernel Hilbert space of the covariance operator $Q$. Then
we obtain
\begin{align*}
E\abs{Xu^\ast-m(u^\ast)}^2=\Var[Xu^\ast]=\scapro{Qu^\ast}{u^\ast}=
\norm{i_Q^\ast u^\ast}_{H_Q}^2.\\
\end{align*}
\end{example}

The cylindrical process $X=(X(t):\,t\ge 0)$ is called {\em adapted to a given filtration
$\{\F_t\}_{t\ge 0}$}, if $X(t)u^\ast$ is $\F_t$-measurable for all $t\ge 0$ and all $u^\ast\in
U^\ast$. The cylindrical process $X$ has weakly independent increments if for all $0\le
t_0<t_1<\dots <t_n$ and all $u^\ast_1,\dots, u^\ast_n\in U^\ast$ the random variables
\begin{align*}
(X(t_1)-X(t_0))u_1^\ast,\dots , (X(t_n)-X(t_{n-1}))u_n^\ast
\end{align*}
are independent.

\begin{remark}
Our definition of  cylindrical processes is based on the definitions
in \cite{BerRo83} and \cite{Vaketal}. In \cite{Metivier80} and
\cite{Schwartz96} cylindrical random variables are considered which
have values in $L^p(\Omega)$ for $p>0$. They assume in addition that
a cylindrical random variable is continuous. The continuity of a
cylindrical variable is reflected by continuity properties of its
characteristic function, see \cite[Prop.IV. 3.4]{Vaketal}. The
notion of weakly independent increments origins from \cite{BerRo83}.
\end{remark}

\begin{example}\label{ex.hat}
Let $Y=(Y(t):\,t\ge 0)$ be a stochastic process with values in a separable Banach space $U$.
Then $\hat{Y}(t)u^\ast:=\scapro{Y(t)}{u^\ast}$ for
$u^\ast\in U^\ast$ defines a cylindrical process $\hat{Y}=(\hat{Y}(t):\,t\ge 0)$. The cylindrical process
 $\hat{Y}$ is adapted if and only if  $Y$ is also adapted and  $\hat{Y}$ has weakly independent increments if and
 only if $Y$ has also independent increments. Both statements are due to the fact that the Borel and the cylindrical $\sigma$-algebras
coincide for separable Banach spaces due to Pettis' measurability theorem.
\end{example}

An {\em $\R^n$-valued Wiener process} $B=(B(t): \,t\ge 0)$ is an adapted stochastic
process
 with independent, stationary increments
$B(t)-B(s)$ which are normally distributed with expectation $E[B(t)-B(s)]=0$ and
covariance Cov$[B(t)-B(s),B(t)-B(s)]=\abs{t-s}C$ for a non-negative definite symmetric
matrix $C$. If $C=\Id$ we call $B$ a {\em standard} Wiener process.
\begin{definition}\label{de.weakcyl}
An adapted cylindrical process $W=(W(t):\,t\ge 0)$ in $U$ is a {\em
weakly cylindrical Wiener process}, if
\begin{enumerate}
\item[{\rm (a)}] for all $u^\ast_1,\dots, u^\ast_n\in U^\ast$ and $n\in \N$ the $\R^n$-valued
stochastic process
\begin{align*}
\big((W(t)u^\ast_1,\dots, W(t)u^\ast_{n}):\,t\ge 0\big)
\end{align*}
is a Wiener process.
%\item[{\rm (b)}] $W$ has weakly independent
%increments.
\end{enumerate}
\end{definition}

Our definition of a weakly cylindrical Wiener process is an obvious
extension of the definition of a finite-dimensional Wiener processes
and is exactly in the spirit of cylindrical processes. The
multidimensional formulation in Definition \ref{de.weakcyl} would be
already necessary to define a finite-dimensional Wiener process by
this approach and it allows to conclude that a weakly cylindrical
Wiener process has weakly independent increments. The latter
property is exactly what is needed in addition to an one-dimensional
formulation:
\begin{lemma}\label{th.weaklyind}
For an adapted cylindrical process $W=(W(t):\,t\ge 0)$ the following are equivalent:
\begin{enumerate}
\item[{\rm (a)}] $W$ is a weakly cylindrical Wiener process;
\item[{\rm (b)}] $W$ satisfies
\begin{enumerate}
  \item[{\rm (i)}] $W$ has weakly independent increments;
  \item[{\rm (ii)}] $(W(t)u^\ast:\,t\ge 0)$ is a Wiener process for all $u^\ast\in U^\ast$.
\end{enumerate}
\end{enumerate}
\end{lemma}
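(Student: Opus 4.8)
The plan is to reduce both implications to one-dimensional statements about real Wiener processes, using throughout that each $W(t):U^\ast\to L^0(\Omega)$ is linear and that a random vector in $\R^n$ is (centred) Gaussian precisely when all its one-dimensional linear images are.

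For \textup{(a)}$\Rightarrow$\textup{(b)} I would first note that (ii) is simply the case $n=1$ in Definition~\ref{de.weakcyl}(a). For (i), fix $0\le t_0<\dots<t_n$ and $u_1^\ast,\dots,u_n^\ast\in U^\ast$ and consider $Z(t):=(W(t)u_1^\ast,\dots,W(t)u_n^\ast)$. By (a) this is an $\R^n$-valued Wiener process, hence has independent increments, so the vectors $Z(t_1)-Z(t_0),\dots,Z(t_n)-Z(t_{n-1})$ are independent; applying the $k$-th coordinate projection $\R^n\to\R$ to the $k$-th of them produces $(W(t_k)-W(t_{k-1}))u_k^\ast$, and since measurable images of independent random variables stay independent, (i) follows.

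The substantial direction is \textup{(b)}$\Rightarrow$\textup{(a)}. Here I would fix $u_1^\ast,\dots,u_n^\ast\in U^\ast$, set $Z(t):=(W(t)u_1^\ast,\dots,W(t)u_n^\ast)$, and verify the defining properties of an $\R^n$-valued Wiener process. Adaptedness is inherited from $W$. For the distributional part, the key observation is that for $\beta_1,\dots,\beta_n\in\R$ and $v^\ast:=\beta_1u_1^\ast+\dots+\beta_nu_n^\ast\in U^\ast$, linearity gives $\beta_1(W(t)-W(s))u_1^\ast+\dots+\beta_n(W(t)-W(s))u_n^\ast=(W(t)-W(s))v^\ast$, which by (ii) is an increment of the real Wiener process $(W(r)v^\ast:r\ge 0)$, hence centred Gaussian with variance linear in $t-s$. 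Thus every one-dimensional linear image of $Z(t)-Z(s)$ is centred Gaussian, so $Z(t)-Z(s)$ is a centred Gaussian vector; its covariance matrix I would compute from the polarisation identity applied to $v^\ast=u_j^\ast$ and $v^\ast=u_j^\ast+u_k^\ast$, obtaining $(t-s)C$ where $C$ is the covariance matrix of $Z(1)-Z(0)$, in particular symmetric, non-negative definite and independent of $s,t$. This delivers Gaussian, centred, stationary increments with covariance $\abs{t-s}C$, as required.

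The one remaining point, which I expect to be the main obstacle, is independence of the increments of $Z$: hypothesis (i) only provides independence of the \emph{scalars} $(W(t_k)-W(t_{k-1}))u_k^\ast$, with one functional per increment, whereas I need joint independence of the \emph{vectors} $Z(t_k)-Z(t_{k-1})$. I would bridge this gap via characteristic functions: given $0\le t_0<\dots<t_m$ and $\beta^{(1)},\dots,\beta^{(m)}\in\R^n$, put $v_k^\ast:=\beta^{(k)}_1u_1^\ast+\dots+\beta^{(k)}_nu_n^\ast$, so that by linearity the linear image of $Z(t_k)-Z(t_{k-1})$ along $\beta^{(k)}$ equals $(W(t_k)-W(t_{k-1}))v_k^\ast$; applying (i) to the times $t_0<\dots<t_m$ and the functionals $v_1^\ast,\dots,v_m^\ast$ makes these $m$ real random variables independent, so the joint characteristic function of $(Z(t_1)-Z(t_0),\dots,Z(t_m)-Z(t_{m-1}))$ factorises for every choice of the $\beta^{(k)}$, which yields independence of the increments. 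Hence $Z$ is an $\R^n$-valued Wiener process, establishing (a).
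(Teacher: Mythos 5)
Your proposal is correct and follows essentially the same route as the paper: linearity of $W(t)-W(s)$ plus the Cram\'er--Wold device to show the increments of $\bigl(W(t)u_1^\ast,\dots,W(t)u_n^\ast\bigr)$ are centred Gaussian and stationary, and hypothesis (i) for their independence. The only difference is that you spell out, via factorisation of the joint characteristic function with the functionals $v_k^\ast=\sum_j\beta_j^{(k)}u_j^\ast$, the step the paper compresses into ``the independence of the increments follows by (i)''.
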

\begin{proof} We have only to show that (b) implies (a). By linearity we have
\begin{align*}
  \beta_1(W(t)-W(s))u_1^\ast+\dots +\beta_n(W(t)-W(s))u_n^\ast
 = (W(t)-W(s))\left(\sum_{i=1}^n \beta_iu^\ast_i\right),
\end{align*}
for all $\beta_i\in\R$ and $u_i^\ast \in U^\ast$ which shows that
the increments of $((W(t)u^\ast_1,\dots, W(t)u^\ast_n)):\, t\ge 0)$
are normally distributed and stationary. The independence of the
increments follows by (i).
\end{proof}

Because $W(1)$ is a centred weakly Gaussian cylindrical random variable
 there exists a weakly Gaussian cylindrical measure $\mu$ such that
\begin{align*}
\phi_{W(1)}(u^\ast)= E[\exp(i W(1)u^\ast)] =
\phi_{\mu}(u^\ast)=\exp\left(-\tfrac{1}{2} s(u^\ast)\right)
\end{align*}
for a quadratic form $s:U^\ast\to \Rp$. Therefore, one obtains
\begin{align*}
\phi_{W(t)}(u^\ast)= E[\exp(iW(t)u^\ast)] =E[\exp\left(i
W(1)(tu^\ast)\right)] =\exp\left(-\tfrac{1}{2}t^2 s(u^\ast)\right)
\end{align*}
for all $t\ge 0$. Thus, the cylindrical distributions of $W(t)$ for all $t\ge 0$ are only determined by
the cylindrical distribution of $W(1)$.

\begin{definition}\label{de.strongW}
A weakly cylindrical Wiener process $(W(t):\, t\ge 0)$ is called
{\em strongly cylindrical Wiener  process}, if
\begin{enumerate}
  \item[{\rm (b)}] the cylindrical distribution $\mu$ of $W(1)$ is strongly Gaussian.
\end{enumerate}
\end{definition}

The additional condition on a weakly cylindrical Wiener  process to
be strongly requests the existence of an $U$-valued covariance
operator for the Gaussian cylindrical measure. To our knowledge
weakly cylindrical Wiener  processes are not defined  in the
literature and (strongly) cylindrical Wiener processes are defined
by means of other conditions. Often, these definition are formulated
by assuming the existence of the reproducing kernel Hilbert space.
But this implies the existence of the covariance operator. Another
popular way for defining cylindrical Wiener processes is by means of
a series. We will see in the next chapter that this is also
equivalent to our definition.

Later, we will compare a strongly cylindrical Wiener process with an
$U$-valued Wiener process. Also the latter is defined as a direct
generalisation of a real-valued Wiener process:
\begin{definition}
An adapted $U$-valued stochastic process $(W(t):\,t\ge 0)$ is called a {\em Wiener process }
if
\begin{enumerate}
\item[{\em (a)}] $W(0)=0$ $P$-a.s.;
\item[{\em (b)}] $W$ has independent, stationary increments;
\item[{\em (c)}] there exists a Gaussian covariance operator $Q:U^\ast\to U$ such that
\begin{align*}
 W(t)-W(s)\stackrel{d}{=} N(0,(t-s)Q)\qquad\text{for all }0\le s\le
 t.
\end{align*}
%\item[{\rm (d)}] $W$ has a.s. continuous paths.
 \end{enumerate}
\end{definition}

If $U$ is finite dimensional then $Q$ can be any symmetric, positive semi-definite matrix. In case that $U$
is a Hilbert space we know already that $Q$ has to be nuclear. For the general case of a Banach space $U$ we
can describe the possible Gaussian covariance operator by Theorem \ref{th.eqradonifying}.

It is obvious that every $U$-valued Wiener process $W$ defines a
strongly cylindrical Wiener process $(\hat{W}(s):\,t\ge 0)$ in $U$
by $\hat{W}(s)u^\ast:=\scapro{W(s)}{u^\ast}$. For the converse
question, if a cylindrical Wiener process can be represented in such
a way by an $U$-valued Wiener process we will derive later necessary
and sufficient conditions.

\section{Representations of cylindrical Wiener processes}

In this section we derive representations of cylindrical Wiener processes and $U$-valued Wiener processes in
terms of some series. In addition, these representations can also serve as a construction of these processes,
see Remark \ref{re.construction}.

\begin{theorem}\label{th.cylsum}
For an adapted cylindrical process  $W:=(W(t):\,t\ge 0)$ the following are equivalent:
\begin{enumerate}
\item[{\rm (a)}] $W$ is a strongly cylindrical Wiener process;
\item[{\rm (b)}] there exist a Hilbert space $H$ with an orthonormal basis $(e_n)_{n\in\N}$, $F\in L(H,U)$
and independent real-valued standard Wiener processes $(B_n)_{n\in\N}$ such that
\begin{align*}
 W(t)u^\ast=\sum_{k=1}^\infty \scapro{Fe_k}{u^\ast} B_k(t) \qquad
    \text{in }L^2(\Omega)\text{ for all $u^\ast\in U^\ast$}.
\end{align*}
\end{enumerate}
\end{theorem}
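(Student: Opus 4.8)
The plan is to prove the two implications separately: (a)$\Rightarrow$(b) by manufacturing the scalar Wiener processes from the reproducing kernel Hilbert space of Section~4 via an isometric factorisation, and (b)$\Rightarrow$(a) by a covariance computation. \emph{From (a) to (b):} assume $W$ is a strongly cylindrical Wiener process, so that the cylindrical distribution $\mu$ of $W(1)$ is centred and strongly Gaussian, hence has a symmetric, positive, $U$-valued covariance operator $Q:U^\ast\to U$. I would take $H:=H_Q$ the associated reproducing kernel Hilbert space, $F:=i_Q:H_Q\to U$ (so $FF^\ast=i_Qi_Q^\ast=Q$ by property (b) of Section~4) and $(e_k)$ an orthonormal basis of $H_Q$, which exists since $U$, and hence $H_Q$, is separable (property (e)). The crucial step is an isometric factorisation of each $W(t)$ through $i_Q^\ast$: since $(W(t)u^\ast:t\ge0)$ is a real-valued Wiener process and $W$ is centred, $\norm{W(t)u^\ast}_{L^2(\Omega)}^2=\Var(W(t)u^\ast)=t\langle Qu^\ast,u^\ast\rangle=t\norm{i_Q^\ast u^\ast}_{H_Q}^2$, so the assignment $i_Q^\ast u^\ast\mapsto W(t)u^\ast$ is well defined and, up to the factor $\sqrt t$, isometric from the range of $i_Q^\ast$ into $L^2(\Omega)$; as that range is dense in $H_Q$ (property (c)), it extends to $\widetilde W(t)\in L(H_Q,L^2(\Omega))$ with $\norm{\widetilde W(t)h}_{L^2(\Omega)}=\sqrt t\norm{h}_{H_Q}$ and $\widetilde W(t)i_Q^\ast u^\ast=W(t)u^\ast$. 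I then set $B_k(t):=\widetilde W(t)e_k$.

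To finish this direction I would check that the $B_k$ are independent standard real Wiener processes and that the series holds. All the variables $\widetilde W(t)h$ lie in the closed Gaussian subspace of $L^2(\Omega)$ generated by $\{W(t)u^\ast\}$ (the finite-dimensional distributions of $W$ are Gaussian by the $\R^n$-valued Wiener property), so the whole family is jointly Gaussian; writing $\widetilde W(s)e_j=\lim_n W(s)u_n^\ast$ for $i_Q^\ast u_n^\ast\to e_j$ in $H_Q$ and passing to the $L^2$-limit one gets $\Cov(\widetilde W(s)e_j,\widetilde W(t)e_k)=(s\wedge t)[e_j,e_k]_{H_Q}=(s\wedge t)\delta_{jk}$, where one uses $\Cov(W(s)u^\ast,W(t)v^\ast)=(s\wedge t)\langle Qu^\ast,v^\ast\rangle$ (a consequence of the two-dimensional Wiener property of $(W(t)u^\ast,W(t)v^\ast)$ and weak independence of increments). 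Hence each $B_k$ has mean zero, covariance $s\wedge t$ and Gaussian, stationary, independent increments, and is adapted because $B_k(t)$ is an $L^2$-limit of $\F_t$-measurable variables; being jointly Gaussian with block-diagonal covariance, the $(B_k)$ are independent. Finally, $i_Q^\ast u^\ast=\sum_k[e_k,i_Q^\ast u^\ast]_{H_Q}e_k$ in $H_Q$ and $\widetilde W(t)$ is bounded and linear, so $W(t)u^\ast=\widetilde W(t)i_Q^\ast u^\ast=\sum_k[e_k,i_Q^\ast u^\ast]_{H_Q}B_k(t)=\sum_k\langle Fe_k,u^\ast\rangle B_k(t)$ in $L^2(\Omega)$, which is (b).

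\emph{From (b) to (a):} the series converges in $L^2(\Omega)$, since by independence of the $B_k$ one has $\norm{\sum_{k=m}^n\langle Fe_k,u^\ast\rangle B_k(t)}_{L^2(\Omega)}^2=t\sum_{k=m}^n\langle Fe_k,u^\ast\rangle^2$ and $\sum_k\langle Fe_k,u^\ast\rangle^2=\norm{F^\ast u^\ast}_H^2<\infty$; thus $W(t)$ is a cylindrical random variable, linear in $u^\ast$ and adapted. For any $u_1^\ast,\dots,u_n^\ast$, independence of the $B_k$ and $\Cov(B_k(s),B_k(t))=s\wedge t$ yield $\Cov(W(s)u_i^\ast,W(t)u_j^\ast)=(s\wedge t)\langle FF^\ast u_i^\ast,u_j^\ast\rangle$, so $((W(t)u_1^\ast,\dots,W(t)u_n^\ast):t\ge0)$ is a centred, jointly Gaussian $\R^n$-valued process with covariance $(s\wedge t)C$, where $C_{ij}=\langle FF^\ast u_i^\ast,u_j^\ast\rangle$ is symmetric and positive semi-definite; its increments over disjoint intervals are uncorrelated, hence independent, and stationary, so it is an $\R^n$-valued Wiener process, and $W$ is a weakly cylindrical Wiener process. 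Moreover $W(1)u^\ast\sim N(0,\langle FF^\ast u^\ast,u^\ast\rangle)$, so $\phi_{W(1)}(u^\ast)=\exp(-\tfrac12\langle FF^\ast u^\ast,u^\ast\rangle)$; since $FF^\ast:U^\ast\to U$ is symmetric, positive and $U$-valued it is the covariance operator of the cylindrical distribution of $W(1)$, which is therefore strongly Gaussian, so $W$ is a strongly cylindrical Wiener process.

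The main obstacle is the construction in (a)$\Rightarrow$(b): one must extract honest scalar Wiener processes from the merely finitely-additive object $W$, and the device I would rely on is the $\sqrt t$-isometric factorisation $W(t)=\widetilde W(t)\circ i_Q^\ast$ through $H_Q$ together with the density of the range of $i_Q^\ast$. The points needing care are then verifying that the resulting $B_k$ are \emph{independent} Wiener processes (not just uncorrelated Gaussian processes) and that the defining series converges to $W(t)u^\ast$ in $L^2(\Omega)$; both are lightened by the fact that the paper's definitions of $\R^n$- and $U$-valued Wiener processes impose no path-continuity.
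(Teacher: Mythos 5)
Your proof is correct, and both directions rest on the same architecture as the paper's: the factorisation $Q=i_Qi_Q^\ast$ through the reproducing kernel Hilbert space, with $F=i_Q$ and the series indexed by an orthonormal basis of $H_Q$. The one genuine difference is how you manufacture the $B_k$ in (a)$\Rightarrow$(b). The paper uses the density of the range of $i_Q^\ast$ to choose the orthonormal basis $(e_n)$ \emph{inside} that range, picks $u_n^\ast$ with $i_Q^\ast u_n^\ast=e_n$, and sets $B_n(t):=W(t)u_n^\ast$ directly; the convergence of the series is then the single computation
\begin{align*}
E\abs{\sum_{k=1}^n\scapro{i_Qe_k}{u^\ast}B_k(t)-W(t)u^\ast}^2
= t\norm{\sum_{k=1}^n [e_k,i_Q^\ast u^\ast]_{H_Q}e_k-i_Q^\ast u^\ast}_H^2\to 0,
\end{align*}
and independence of the $B_n$ is checked exactly as you do, via uncorrelatedness of jointly Gaussian variables. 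Your $\sqrt{t}$-isometric extension $\widetilde W(t)$ of $i_Q^\ast u^\ast\mapsto W(t)u^\ast$ is a legitimate alternative that buys slightly more --- the representation holds for an \emph{arbitrary} orthonormal basis of $H_Q$, not only one sitting in the range of $i_Q^\ast$ --- at the price of having to re-verify the Wiener property and adaptedness of the $B_k$ after passing to $L^2$-limits (for adaptedness one should, strictly, pass to $\F_t$-measurable versions), all of which you handle. In (b)$\Rightarrow$(a) the two routes are essentially equivalent: the paper applies Doob's inequality, thereby obtaining uniform convergence on $[0,T]$ and continuous paths as a by-product, and identifies the law of the increments via characteristic functions, whereas you use orthogonality of the $B_k$ for the $L^2$-convergence and a covariance computation for jointly Gaussian families; both correctly conclude that the covariance operator is $FF^\ast$, hence $U$-valued, so that $W$ is strongly cylindrical.
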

\begin{proof}
(b) $\;\Rightarrow\;$ (a) By Doob's inequality we obtain for any $m,n\in\N$
\begin{align*}
 E\left[ \sup_{t\in [0,T]}\abs{\sum_{k=n}^{n+m} \scapro{Fe_k}{u^\ast} B_k(t)}^2\right]
&\le 4 E \abs{\sum_{k=n}^{n+m} \scapro{Fe_k}{u^\ast} B_k(T)}^2\\
&= 4 T\sum_{k=n}^{n+m} \scapro{e_k}{F^\ast u^\ast}^2\\
&\to 0 \qquad\text{for }m,n\to\infty.
\end{align*}
Thus, for every $u^\ast\in U^\ast$ the random variables $W(t)u^\ast$
are well defined and form a cylindrical process $(W(t):\,t\ge 0)$.
For any  $0=t_0<t_1<\dots <t_m$ and $\beta_k\in\R$ we calculate
\begin{align*}
& E\left[\exp\left(i\sum_{k=0}^{m-1} \beta_k (W(t_{k+1})u^\ast - W(t_k)u^\ast)\right)\right]\\
 &\qquad=\lim_{n\to\infty}
  E\left[\exp\left(i\sum_{k=0}^{m-1} \beta_k \sum_{l=1}^n \scapro{Fe_l}{u^\ast}(B_l(t_{k+1})-B_l(t_k))
\right)\right]\\
&\qquad =\lim_{n\to\infty} \prod_{k=0}^{m-1}\prod_{l=1}^n E\Big[ \exp \left( i\beta_k
  \scapro{Fe_l}{u^\ast}(B_l(t_{k+1})-B_l(t_k))\right)\Big]\\
&\qquad =\lim_{n\to\infty} \prod_{k=0}^{m-1}\prod_{l=1}^n  \exp \Big( -\tfrac{1}{2}\beta_k^2
  \scapro{Fe_l}{u^\ast}^2(t_{k+1}-t_k)\Big)\\
&\qquad = \prod_{k=0}^{m-1} \exp \left( -\tfrac{1}{2}\beta_k^2
  \norm{F^\ast u^\ast}^2_H(t_{k+1}-t_k)\right),
\end{align*}
which shows that $(W(t)u^\ast:\,t\ge 0)$ has independent, stationary Gaussian increments. Because the partial
sums converge uniformly on every finite interval the process $(W(t)u^\ast:\,t\ge 0)$ has  a.s. continuous
paths and is therefore established as a real-valued Wiener process.

The calculation above  of the characteristic function yields
\begin{align*}
  E\left[\exp(i W(1)u^\ast)\right]=\exp\left(-\tfrac{1}{2} \norm{F^\ast u^\ast}_H^2\right)
  =\exp\left(-\tfrac{1}{2}\scapro{FF^\ast u^\ast}{u^\ast}^2\right).
\end{align*}
Hence, the process $W$ is a strongly cylindrical Wiener process with covariance operator $Q:=FF^\ast$.

(a) $\Rightarrow$ (b): Let $Q:U^\ast\to U$ be the covariance
operator of $W(1)$ and $H$ its reproducing kernel Hilbert space with
the inclusion mapping $i_Q:H\to U$. Because the range of $i_Q^\ast$
is dense in $H$ and $H$ is separable there exists an orthonormal
basis $(e_n)_{n\in\N}\subseteq$range$(i_Q^\ast)$ of $H$. We choose
$u_n^\ast\in U^\ast$ such that $i_Q^\ast u_n^\ast=e_n$ for all
$n\in\N$ and define $B_n(t):=W(t)u_n^\ast$. Then we obtain
\begin{align*}
  E\abs{ \sum_{k=1}^n \scapro{i_Qe_k}{u^\ast}B_k(t) - W(t)u^\ast }^2
  &=E\left[W(t)\left(\sum_{k=1}^n \scapro{i_Qe_k}{u^\ast} u_k^\ast -u^\ast\right)\right]^2\\
 &=t\norm{i_Q^\ast\left(\sum_{k=1}^n \scapro{i_Qe_k}{u^\ast}u_k^\ast -u^\ast\right)}^2_H\\
 &=t\norm{\sum_{k=1}^n [e_k,i_Q^\ast u^\ast]_{H_Q}e_k -i_Q^\ast u^\ast }^2_H\\
 &\to 0 \qquad\text{for }n\to\infty.
\end{align*}
Thus, $W$ has the required representation and it remains to establish that the Wiener processes
$B_n:=(B_n(t):\,t\ge 0)$ are independent. Because of the Gaussian distribution it is sufficient to establish
that $B_n(s)$ and $B_m(t)$ for any $s\le t$ and $m,n\in\N$ are independent:
\begin{align*}
 E[B_n(s)B_m(t)]&=
 E[W(s)u_n^\ast W(t)u_m^\ast]\\
& = E[W(s)u^\ast_n (W(t)u_m^\ast- W(s)u_m^\ast)] + E[W(s)u_n^\ast
W(s)u_m^\ast]. \intertext{The first term is zero by Theorem
\ref{th.weaklyind} and for the second term we obtain}
 E[W(s)u_n^\ast W(s)u_m^\ast]
 &=s\scapro{Qu_n^\ast}{u_m^\ast}
 = s [i_Q^\ast u_n^\ast,i_Q^\ast u_m^\ast]_{H_Q}
 =s  [e_n,e_m]_{H_Q} =s \delta_{n,m}.
\end{align*}
Hence, $B_n(s)$ and $B_m(t)$ are uncorrelated and therefore independent.
\end{proof}

\begin{remark}
  The proof has shown that the Hilbert space $H$ in part (b) can be chosen as the reproducing kernel
  Hilbert space associated to the Gaussian cylindrical distribution of $W(1)$. In this
  case the function $F:H \to U$ is the inclusion mapping $i_Q$.
\end{remark}

\begin{remark}
Let $H$ be  a separable Hilbert space with orthonormal basis
  $(e_k)_{k\in\N}$ and $(B_k(t):\,t\ge 0)$ be independent real-valued Wiener
  processes. By setting $U=H$ and $F=\Id$ Theorem \ref{th.cylsum} yields that a strongly cylindrical
  Wiener process $(W_H(t):\, t\ge 0)$ is defined by
  \begin{align*}
     W_H(t)h= \sum_{k=1}^\infty \scapro{e_k}{h} B_k(t)
  \qquad\text{for all }h\in H .
  \end{align*}
The covariance operator of $W_H$ is $\Id:H\to H$. This is the
approach how a cylindrical Wiener process is defined for example in
\cite{Bogachev98} and \cite{NeeWeis}.

If in addition $U$ is a separable Banach space and $F\in L(H,U)$ we
obtain by defining
\begin{align*}
W(t)u^\ast := W_H(t)(F^\ast u^\ast) \qquad\text{for all }u^\ast\in U^\ast,
\end{align*}
a strongly cylindrical Wiener process $(W(t):\, t\ge 0)$ with covariance operator
$Q:=FF^\ast$ according to our Definition \ref{de.strongW}.
\end{remark}

% For that, let  $0=t_0<t_1<\dots <t_m$ and $\beta_{k,l}\in\R$. With
%%\begin{align*}
%$\gamma_{k,j}:=\beta_{k,1}+\dots + \beta_{k,j} \text{ for }j=1,\dots, n$
%%\end{align*}
%we calculate
%\begin{align*}
% & E\left[ \exp\left(i\sum_{k=1}^m \sum_{l=1}^n \beta_{k,l} B_{n_k}(t_l)\right)\right]\\
%&\qquad = E\left[ \exp\left(i\sum_{k=1}^m \sum_{l=1}^n (\gamma_{k,l}-\gamma_{k,l-1}) W(t_l)u_{n_k}^\ast\right)\right]\\
%&\qquad = E\left[ \exp\left(i\sum_{k=1}^m \sum_{l=1}^n (\gamma_{k,m}-\gamma_{k,l-1}) W(t_l)u_{n_k}^\ast
% - (\gamma_{k,m}-\gamma_{k,l})W(t_l)u_{n_k}^\ast \right)\right]\\
%&\qquad = E\left[ \exp\left(i\sum_{k=1}^{m} \sum_{l=0}^{n-1} (\gamma_{k,m}-\gamma_{k,l-1})
%(W(t_{l+1})u_{n_k}^\ast-W(t_l)u_{n_k}^\ast)\right)\right]\\
%&\qquad = E\left[ \exp\left(i\sum_{l=0}^{n-1} \left(W(t_{l+1})\left( \sum_{k=1}^{m} (\gamma_{k,m}-\gamma_{k,l-1})u_{n_k}^\ast\right)
%- W(t_l)\left( \sum_{k=1}^{m} (\gamma_{k,m}-\gamma_{k,l-1})u_{n_k}^\ast\right)\right)\right)\right]\\
%&\qquad = \exp\left(-\tfrac{1}{2} \sum_{l=0}^{n-1} (t_{l+1}-t_l)\norm{ \sum_{k=1}^{m} (\gamma_{k,m}-\gamma_{k,l-1})e_{n_k}}^2_H\right)\\
%&\qquad = \exp\left(-\tfrac{1}{2} \sum_{l=0}^{n-1} (t_{l+1}-t_l)\sum_{k=1}^{m} (\gamma_{k,m}-\gamma_{k,l-1})^2\right)\\
%&\qquad =\prod_{k=1}^m \exp\left(-\tfrac{1}{2} \sum_{l=0}^{n-1} (t_{l+1}-t_l) (\gamma_{k,m}-\gamma_{k,l-1})^2\right)\\
%&\qquad =\prod_{k=1}^m E\left[ \exp\left(i \sum_{l=1}^{n} \beta_{k,l} B_{n_k}(t_l)\right)\right]
%\end{align*}
%which establishes that $B_{n_1},\dots, B_{n_m}$ are independent.

\begin{theorem}\label{th.Usum}
For an adapted $U$-valued process  $W:=(W(t):\,t\ge 0)$ the following are equivalent:
\begin{enumerate}
\item[{\rm (a)}] $W$ is an $U$-valued Wiener process;
\item[{\rm (b)}] there exist a Hilbert space $H$ with an orthonormal basis $(e_n)_{n\in\N}$, a $\gamma$-radonifying
operator $F\in L(H,U)$ and independent real-valued standard Wiener processes $(B_n)_{n\in\N}$ such that
\begin{align*}
 W(t)=\sum_{k=1}^\infty Fe_k B_k(t) \qquad
    \text{in }L^2(\Omega;U).
\end{align*}
\end{enumerate}
\end{theorem}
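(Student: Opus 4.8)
The plan is to reduce the $U$-valued statement to the cylindrical statement of Theorem \ref{th.cylsum} together with the characterisation of $\gamma$-radonifying operators in Theorem \ref{th.eqradonifying}. The two directions are handled separately.

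For the implication (b) $\Rightarrow$ (a): assume the series $\sum_k Fe_k B_k(t)$ converges in $L^2(\Omega;U)$ with $F$ $\gamma$-radonifying. First I would note that convergence in $L^2(\Omega;U)$ follows for free from the $\gamma$-radonifying hypothesis: by Theorem \ref{th.eqradonifying}(e) the series $\sum_k G_k Fe_k$ converges in $L^p(\Omega;U)$ for every $p$, and freezing $t$ and rescaling the independent increments reduces $\sum_k Fe_k B_k(t)$ to $\sqrt{t}\,\sum_k G_k Fe_k$ in distribution; a standard martingale/Lévy argument in $U$ (or a direct Cauchy estimate using the symmetry of the summands) upgrades this to genuine $L^2(\Omega;U)$-convergence of the partial sums, defining a $U$-valued process $W(t)$. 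Adaptedness is inherited from the $B_k$. Property (a) then has three parts. For $W(0)=0$ this is immediate. For independent stationary increments: apply the cylindrical process $\hat W$ of Example \ref{ex.hat}, observe that $\hat W(t)u^\ast = \sum_k \scapro{Fe_k}{u^\ast}B_k(t)$ in $L^2(\Omega)$, so by Theorem \ref{th.cylsum} $\hat W$ is a strongly cylindrical Wiener process and in particular has weakly independent increments; since the Borel and cylindrical $\sigma$-algebras on the separable space $U$ coincide (Pettis, as in Example \ref{ex.hat}), weak independence of increments of $\hat W$ is the same as independence of increments of $W$, and stationarity transfers the same way. For part (c): the characteristic functional computation in the proof of Theorem \ref{th.cylsum} gives $E\exp(i\scapro{W(t)-W(s)}{u^\ast}) = \exp(-\tfrac12(t-s)\scapro{FF^\ast u^\ast}{u^\ast})$, so $W(t)-W(s)$ has cylindrical distribution equal to that of $N(0,(t-s)FF^\ast)$; since $F$ is $\gamma$-radonifying this cylindrical distribution is a genuine Gaussian measure on $\Borel(U)$ (Theorem \ref{th.eqradonifying}(a)$\Leftrightarrow$(b)), so $Q:=FF^\ast$ is a Gaussian covariance operator and $W(t)-W(s)\stackrel{d}{=}N(0,(t-s)Q)$, which is (c).

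For the implication (a) $\Rightarrow$ (b): let $W$ be a $U$-valued Wiener process with Gaussian covariance operator $Q:U^\ast\to U$. Pass to $\hat W$ as in Example \ref{ex.hat}; then $\hat W$ is an adapted cylindrical process, it has weakly independent increments (because $W$ has independent increments), and each $\hat W(t)u^\ast = \scapro{W(t)}{u^\ast}$ is a real-valued Wiener process with variance $t\scapro{Qu^\ast}{u^\ast}$, so by Lemma \ref{th.weaklyind} and Definition \ref{de.strongW} $\hat W$ is a strongly cylindrical Wiener process with covariance operator $Q$. Theorem \ref{th.cylsum} now supplies a separable Hilbert space $H$ with orthonormal basis $(e_n)$, an operator $F\in L(H,U)$ and independent standard real Wiener processes $(B_n)$ with $\hat W(t)u^\ast = \sum_k \scapro{Fe_k}{u^\ast}B_k(t)$ in $L^2(\Omega)$ for every $u^\ast$, and by the Remark following that theorem one may take $H=H_Q$ and $F=i_Q$, so that $FF^\ast = Q$. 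It remains to show $F$ is $\gamma$-radonifying and that $\sum_k Fe_k B_k(t)$ converges to $W(t)$ in $L^2(\Omega;U)$ (not merely weakly, coordinate by coordinate). Since $Q=FF^\ast$ is by hypothesis a Gaussian covariance operator, $F$ is $\gamma$-radonifying by Theorem \ref{th.eqradonifying}(b)$\Rightarrow$(a); then Theorem \ref{th.eqradonifying}(e) gives convergence of $\sum_k G_k Fe_k$ in $L^2(\Omega;U)$, hence by the rescaling remark above the partial sums $S_n(t):=\sum_{k\le n} Fe_k B_k(t)$ form an $L^2(\Omega;U)$-Cauchy sequence with some limit $\tilde W(t)\in L^2(\Omega;U)$; applying any $u^\ast$ shows $\scapro{\tilde W(t)}{u^\ast} = \hat W(t)u^\ast = \scapro{W(t)}{u^\ast}$ a.s., and since $U^\ast$ separates points (and $U$ is separable, so countably many functionals suffice) $\tilde W(t)=W(t)$ a.s. This gives the representation in (b).

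The main obstacle is the passage from "weak'' convergence of the series — convergence of $\sum_k \scapro{Fe_k}{u^\ast}B_k(t)$ in $L^2(\Omega)$ for each fixed $u^\ast$, which is all Theorem \ref{th.cylsum} delivers — to genuine convergence of $\sum_k Fe_k B_k(t)$ in $L^2(\Omega;U)$. This is exactly the point where the $\gamma$-radonifying hypothesis is indispensable and where one must invoke Theorem \ref{th.eqradonifying}, together with the elementary but slightly fiddly observation that $\sum_k Fe_k B_k(t)$ and $\sqrt{t}\sum_k G_k Fe_k$ have the same distribution (and the same holds for tail sums), so that $L^p$-convergence of the Gaussian series transfers to $L^2(\Omega;U)$-convergence of the Wiener series; everything else is bookkeeping with characteristic functionals and the Borel/cylindrical coincidence on separable $U$.
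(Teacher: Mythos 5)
Your proposal is correct, and the direction (b) $\Rightarrow$ (a) matches the paper's argument in substance: the paper applies Doob's inequality to the tail sums and invokes Theorem \ref{th.eqradonifying} for their convergence (implicitly using the same rescaling $\sum_k Fe_k B_k(T)\stackrel{d}{=}\sqrt{T}\sum_k G_kFe_k$ that you make explicit), then refers back to the characteristic-functional computation of Theorem \ref{th.cylsum}; your treatment of the increments via $\hat W$ and the Borel/cylindrical coincidence merely spells out what the paper leaves as ``proceed as in the proof of Theorem \ref{th.cylsum}''. The genuine divergence is in (a) $\Rightarrow$ (b). The paper takes the weak representation supplied by Theorem \ref{th.cylsum} and upgrades it to $L^2(\Omega;U)$-convergence of the $U$-valued series by two external results: the It{\^o}--Nisio theorem (to pass from convergence of the coordinates $\scapro{\cdot}{u^\ast}$ to a.s. convergence in $U$) and a result of Hoffmann-Jorgensen (to pass from a.s. convergence to convergence in $L^2(\Omega;U)$); only afterwards does it read off that $F$ is $\gamma$-radonifying from the convergence criteria of Theorem \ref{th.eqradonifying}. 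You argue in the opposite order: the definition of a $U$-valued Wiener process already provides a Gaussian covariance operator $Q=FF^\ast$, so $F=i_Q$ is $\gamma$-radonifying directly by the implication (b) $\Rightarrow$ (a) of Theorem \ref{th.eqradonifying}; part (e) of that theorem plus the rescaling then makes the partial sums $L^2(\Omega;U)$-Cauchy, and the limit is identified with $W(t)$ by testing against a countable separating family of functionals in the separable space $U$. Your route avoids It{\^o}--Nisio and Hoffmann-Jorgensen entirely at the modest cost of that final identification step, while the paper's route additionally yields the a.s. convergence of the series to $W(t)$; both arguments are complete.
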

\begin{proof} (b) $\Rightarrow$ (a): As in the proof of Theorem \ref{th.cylsum}
 we obtain by Doob's Theorem  (but here for infinite-dimensional spaces) for
any $m,n\in \N$
\begin{align*}
 E\left[ \sup_{t\in [0,T]}\norm{\sum_{k=n}^{n+m} Fe_k B_k(t)}^2\right]
&\le 4 E \norm{\sum_{k=n}^{n+m} Fe_k  B_k(T)}^2\\
&\to 0 \qquad\text{for }m,n\to\infty,
\end{align*}
where the convergence follows by Theorem \ref{th.eqradonifying} because $F$ is $\gamma$-radonifying. Thus,
the random variables $W(t)$ are well defined and form an $U$-valued stochastic process $W:=(W(t):\,t\ge 0)$.
As in the proof of Theorem \ref{th.cylsum} we can proceed to establish that $W$ is an $U$-valued Wiener
process.

(a) $\Rightarrow$ (b): By Theorem \ref{th.cylsum} there exist a
Hilbert space $H$ with an orthonormal basis $(e_n)_{n\in\N}$, $F\in
L(H,U)$ and independent real-valued standard Wiener processes
$(B_n)_{n\in\N}$ such that
\begin{align*}
 \scapro{W(t)}{u^\ast}=\sum_{k=1}^\infty \scapro{Fe_k}{u^\ast} B_k(t) \qquad
    \text{in }L^2(\Omega)\text{ for all $u^\ast\in U^\ast$}.
\end{align*}
The It{\^o}-Nisio Theorem \cite[Thm.V.2.4]{Vaketal} implies
\begin{align*}
W(t)=\sum_{k=1}^\infty Fe_k B_k(t) \qquad
    \text{$P$-a.s. for all $u^\ast\in U^\ast$}
\end{align*}
and a result by Hoffmann-Jorgensen \cite[Cor.2 in V.3.3]{Vaketal}
yields the convergence in $L^2(\Omega;U)$. Theorem
\ref{th.eqradonifying} verifies $F$ as $\gamma$-radonifying.
\end{proof}

\begin{remark}\label{re.construction}
 In the proofs of the implication from (b) to (a) we established in  both Theorems \ref{th.cylsum} and \ref{th.Usum}
 even more than required: we established the convergence of the series in the specified sense without
 assuming the existence of the limit process, respectively. This means, that we can read these results also
 as a construction principle of cylindrical or $U$-valued Wiener processes without assuming the existence of the considered
 process a priori.

 The construction of these random objects differs significantly in the required conditions on the involved operator $F$.
 For a cylindrical Wiener process no conditions are required, however, for an $U$-valued Wiener process we have to guarantee
 $Q=FF^\ast$ to be a covariance operator of a Gaussian measure by assuming $F$ to be $\gamma$-radonifying.
\end{remark}

\section{When is  a  cylindrical Wiener process $U$-valued ?}

In this section we give equivalent conditions for a strongly cylindrical Wiener process
to be an $U$-valued Wiener process.  To be more precise a cylindrical random variable
$X:U^\ast \to L^0(\Omega)$ is called {\em induced by a random variable $Z:\Omega\to U$},
if $P$-a.s.
\begin{align*}
 Xu^\ast= \scapro{Z}{u^\ast}\qquad\text{for all }u^\ast\in U^\ast.
\end{align*}
This definition generalises in an obvious way to cylindrical processes.

Because of the correspondence to cylindrical measures the question whether a cylindrical random variable is
induced by an $U$-valued random variable is reduced to the question whether the cylindrical measure extends
to a Radon measure (\cite[Thm. Vi.3.1]{Vaketal}). There is a classical answer by Prokhorov (\cite[Thm.
VI.3.2]{Vaketal}) to this question in terms of tightness. A cylindrical measure $\mu$ on $\Cc(U)$ is called
{\em tight} if for each $\epsilon>0$ there exists  a compact subset $K\subseteq U$ such that
 \begin{align*}
  \mu(K)\ge 1-\epsilon.
 \end{align*}
In case of non-separable Banach spaces $U$ one has to be more careful because then compact sets are not
necessarily admissible arguments of a cylindrical measure.

\begin{theorem}\label{th.cyl=radon}
For a strongly cylindrical Wiener process  $W:=(W(t):\,t\ge 0)$ with covariance operator
$Q=i_Qi_Q^\ast$ the following are equivalent:
\begin{enumerate}
\item[{\rm (a)}] $W$ is induced by an $U$-valued Wiener process;
\item[{\rm (b)}] $i_Q$ is $\gamma$-radonifying;
\item[{\rm (c)}] the cylindrical distribution of $W(1)$ is tight;
\item[{\rm (d)}] the cylindrical distribution of $W(1)$ extends to a measure.
\end{enumerate}
\end{theorem}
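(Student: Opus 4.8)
The plan is to establish that each of (b), (c) and (a) is equivalent to (d). Write $\mu$ for the cylindrical distribution of $W(1)$; by Remark~\ref{re.repandimage} it is the image $\gamma\circ i_Q^{-1}$ of the standard Gaussian cylindrical measure $\gamma$ on $H_Q$, so that $\phi_\mu(u^\ast)=\exp(-\tfrac{1}{2}\scapro{Qu^\ast}{u^\ast})$. The equivalence (b)$\Leftrightarrow$(d) is then almost a tautology: by definition $i_Q$ is $\gamma$-radonifying precisely when $\gamma\circ i_Q^{-1}=\mu$ extends to a Gaussian measure on $\Borel(U)$, which gives (b)$\Rightarrow$(d); conversely, if $\mu$ extends to a measure $\tilde\mu$ on $\Borel(U)$, then for every $u^\ast\in U^\ast$ the image $\tilde\mu_{u^\ast}$ equals $\mu_{u^\ast}$, a centred Gaussian measure of variance $\scapro{Qu^\ast}{u^\ast}$, so $\tilde\mu$ is Gaussian and $i_Q$ is $\gamma$-radonifying. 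The equivalence (c)$\Leftrightarrow$(d) is Prokhorov's theorem on extensions of cylindrical measures, \cite[Thm.~VI.3.2]{Vaketal}, together with the fact that on the separable space $U$ every finite Borel measure is Radon.

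It remains to link (a) with (d). If $W$ is induced by a $U$-valued Wiener process $V$, then $\scapro{V(1)}{u^\ast}=W(1)u^\ast$ almost surely for every $u^\ast\in U^\ast$, so the law of $V(1)$ on $\Borel(U)$ restricts on each $\Cc(U,\Gamma)$ to $\mu$; hence $\mu$ extends to a measure and (d) holds.

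For the converse I would assume (d), hence also (b), so that $i_Q$ is $\gamma$-radonifying. By Theorem~\ref{th.cylsum} and its proof there are an orthonormal basis $(e_k)_{k\in\N}$ of $H_Q$ contained in the range of $i_Q^\ast$ and functionals $u_k^\ast\in U^\ast$ with $i_Q^\ast u_k^\ast=e_k$ such that the processes $B_k:=(W(t)u_k^\ast:t\ge 0)$ are independent real-valued standard Wiener processes and $W(t)u^\ast=\sum_{k=1}^\infty\scapro{i_Qe_k}{u^\ast}B_k(t)$ in $L^2(\Omega)$ for every $u^\ast$. Since $i_Q$ is $\gamma$-radonifying, the construction in the proof of Theorem~\ref{th.Usum} (cf.\ Remark~\ref{re.construction}), applied with $F=i_Q$, shows that $V(t):=\sum_{k=1}^\infty i_Qe_kB_k(t)$ converges in $L^2(\Omega;U)$ and that $V=(V(t):t\ge 0)$ is a $U$-valued Wiener process; moreover $\scapro{V(t)}{u^\ast}=\sum_{k=1}^\infty\scapro{i_Qe_k}{u^\ast}B_k(t)=W(t)u^\ast$ in $L^2(\Omega)$ for every $u^\ast$, so $W$ is induced by $V$ (cf.\ \cite[Thm.~VI.3.1]{Vaketal}).

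The main obstacle is the implication (d)$\Rightarrow$(a): Theorem~\ref{th.cylsum} only provides a representation of $W$ in which the defining series converges coordinatewise in $L^2(\Omega)$, and one has to upgrade this — using that $i_Q$ is $\gamma$-radonifying, which is exactly where the hypothesis enters via (b) and the theory of $\gamma$-radonifying operators — to genuine convergence in $L^2(\Omega;U)$, so as to obtain an honest $U$-valued process, and then to verify that this process is a Wiener process inducing $W$. The remaining implications reduce quickly to the definition of a $\gamma$-radonifying operator and to the cited extension theorems of Vakhaniya et al.
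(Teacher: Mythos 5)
Your proof is correct and follows essentially the same route as the paper: the decisive implication towards (a) is obtained, exactly as in the paper, by combining the series representation of Theorem~\ref{th.cylsum} with the construction of Theorem~\ref{th.Usum} for the $\gamma$-radonifying operator $i_Q$, and the remaining equivalences rest on Remark~\ref{re.repandimage} and Prokhorov's theorem. The only (immaterial) difference is the routing — you hub everything through (d) and derive (b)$\Leftrightarrow$(d) directly from the definition of $\gamma$-radonifying, whereas the paper proves (a)$\Rightarrow$(b) via Theorem~\ref{th.eqradonifying} and cites Prokhorov for (b)$\Leftrightarrow$(c)$\Leftrightarrow$(d).
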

\begin{proof}
(a) $\Rightarrow$ (b) If there exists an $U$-valued Wiener process $(\tilde{W}(t):\,t\ge 0)$ with
$W(t)u^\ast=\scapro{\tilde{W}(t)}{u^\ast}$ for all $u^\ast\in U^\ast$, then $\tilde{W}(1)$ has a Gaussian
distribution with covariance operator $Q$. Thus, $i_Q$ is $\gamma$-radonifying by Theorem
\ref{th.eqradonifying}.

(b)$\Leftrightarrow$ (c) $\Leftrightarrow$ (d) This is Prokhorov's Theorem on cylindrical measures.

(b)$\Rightarrow $(a) Due to Theorem \ref{th.cylsum} there exist an orthonormal basis $(e_n)_{n\in\N}$ of the
reproducing kernel Hilbert space of $Q$ and independent standard real-valued Wiener process $(B_k(t):\,t\ge
0)$ such that
\begin{align*}
W(t)u^\ast=\sum_{k=1}^\infty \scapro{i_Q e_k}{u^\ast} B_k(t)\qquad\text{for all } u^\ast\in U^\ast.
\end{align*}
On the other hand, because $i_Q$ is $\gamma$-radonifying Theorem \ref{th.Usum} yields that
\begin{align*}
\tilde{W}(t)=\sum_{k=1}^\infty i_Q e_k B_k(t)
\end{align*}
defines an $U$-valued Wiener process $(\tilde{W}(t):\,t\ge 0)$. Obviously, we have
 $W(t)u^\ast=\scapro{\tilde{W}(t)}{u^\ast}$ for all $u^\ast$.
\end{proof}

If $U$ is a separable Hilbert space we can replace the condition (b) by
\begin{enumerate}
\item[(b$^\prime$)] $i_Q$ is Hilbert-Schmidt
\end{enumerate}
because of Theorem \ref{th.gammahilbert}.

\section{Integration}

In this section we introduce an integral with respect to a strongly cylindrical Wiener
process $W=(W(t):\,t\ge 0)$ in $U$. The integrand is a stochastic process with values in
$L(U,V)$, the set of bounded linear operators from $U$ to $V$, where $V$ denotes a
separable Banach space. For that purpose we assume for $W$ the representation according
to Theorem \ref{th.cylsum}:
\begin{align*}
 W(t)u^\ast=\sum_{k=1}^\infty \scapro{i_Qe_k}{u^\ast} B_k(t) \qquad
    \text{in }L^2(\Omega)\text{ for all $u^\ast\in U^\ast$},
\end{align*}
where $H$ is the reproducing kernel Hilbert space of the covariance operator $Q$ with the
inclusion mapping $i_Q:H\to U$ and an orthonormal basis  $(e_n)_{n\in\N}$ of $H$. The
real-valued standard Wiener processes $(B_k(t):\,t\ge 0)$ are defined by
$B_k(t)=W(t)u_k^\ast$  for some $u_k^\ast\in U^\ast$ with $i_Q^\ast u_k^\ast=e_k$.
\begin{definition}
  The set $M_T(U,V)$ contains all random variables $\Phi:[0,T]\times \Omega\to L(U,V)$ such that:
\begin{enumerate}
  \item[{\rm (a)}] $(t,\omega)\mapsto \Phi^\ast(t,\omega)v^\ast$ is $\Borel[0,T]\otimes \A$ measurable for all
             $v^\ast\in V^\ast$;
  \item[{\rm (b)}] $\omega\mapsto \Phi^\ast(t,\omega)v^\ast$ is $\F_t$-measurable for all
       $v^\ast\in V^\ast$ and $t\in [0,T]$;
  \item[{\rm (c)}] $\displaystyle \int_0^T E\norm{\Phi^\ast(s,\cdot)v^\ast}_{U^\ast}^2\,ds<\infty \;$ for all $v^\ast\in
  V^\ast$.
\end{enumerate}
\end{definition}
As usual we neglect  the dependence of  $\Phi\in M_T(U,V)$ on $\omega$
and write $\Phi(s)$ for $\Phi(s,\cdot)$ as well as for the dual
operator $\Phi^\ast(s):=\Phi^\ast(s,\cdot)$ where
$\Phi^\ast(s,\omega)$ denotes the dual operator of
$\Phi(s,\omega)\in L(U,V)$.

We define the candidate for a stochastic integral:
\begin{definition}\label{de.I_t}
For $\Phi\in M_T(U,V)$ we define
\begin{align*}
 I_t(\Phi)v^\ast:= \sum_{k=1}^\infty \int_0^t \scapro{\Phi(s)i_Q e_k}{v^\ast}\, dB_k(s)
 \qquad \text{in }L^2(\Omega)
\end{align*}
for all $v^\ast\in V^\ast$ and $t \in [0,T]$.
\end{definition}

The stochastic integrals appearing in Definition \ref{de.I_t} are the  known real-valued
It{\^o} integrals and they are well defined thanks to our assumption on $\Phi$. In the
next Lemma we establish that the asserted limit exists:

\begin{lemma}\label{le.cylintwell}
$I_t(\Phi):V^\ast \to L^2(\Omega)$ is a well-defined cylindrical random variable in $V$ which is independent
of the representation of $W$, i.e. of $(e_n)_{n\in\N}$ and $(u_n^\ast)_{n\in\N}$.
\end{lemma}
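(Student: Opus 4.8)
The plan is to verify three things in sequence: first, that for each fixed $v^\ast\in V^\ast$ and $t\in[0,T]$ the infinite series defining $I_t(\Phi)v^\ast$ converges in $L^2(\Omega)$; second, that the map $v^\ast\mapsto I_t(\Phi)v^\ast$ is linear; and third, that the limit does not depend on the chosen basis $(e_n)_{n\in\N}$ of $H$ and the associated functionals $(u_n^\ast)_{n\in\N}$. The first two give that $I_t(\Phi)$ is a genuine cylindrical random variable in $V$ (in the sense of Definition \ref{de.cylrv}), and the third is the well-definedness claim proper.

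For the convergence, I would fix $v^\ast$ and use that the real-valued It{\^o} integrals $\int_0^t\scapro{\Phi(s)i_Qe_k}{v^\ast}\,dB_k(s)$, $k\in\N$, are \emph{orthogonal} in $L^2(\Omega)$: since the $B_k$ are independent standard Wiener processes, the It{\^o} isometry and independence give, for $m<n$,
\begin{align*}
 E\left[\abs{\sum_{k=m}^{n}\int_0^t\scapro{\Phi(s)i_Qe_k}{v^\ast}\,dB_k(s)}^2\right]
 =\sum_{k=m}^{n}\int_0^t E\abs{\scapro{\Phi(s)i_Qe_k}{v^\ast}}^2\,ds
 =\sum_{k=m}^{n}\int_0^t E\scapro{e_k}{i_Q^\ast\Phi^\ast(s)v^\ast}^2\,ds.
\end{align*}
Summing over all $k$ and using Parseval's identity in $H$ together with the boundedness of $i_Q^\ast$ turns the total into $\int_0^t E\norm{i_Q^\ast\Phi^\ast(s)v^\ast}_H^2\,ds\le\norm{i_Q}_{H\to U}^2\int_0^T E\norm{\Phi^\ast(s)v^\ast}_{U^\ast}^2\,ds$, which is finite by condition (c) in the definition of $M_T(U,V)$. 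Hence the partial sums form a Cauchy sequence in $L^2(\Omega)$ and the series converges. Linearity of $v^\ast\mapsto I_t(\Phi)v^\ast$ is immediate from linearity of each finite partial sum in $v^\ast$ and the fact that $L^2$-limits respect linear combinations, so $I_t(\Phi)$ is a cylindrical random variable in $V$.

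For independence of the representation, let $(e_n)_{n\in\N}$, $(u_n^\ast)_{n\in\N}$ and $(\tilde e_n)_{n\in\N}$, $(\tilde u_n^\ast)_{n\in\N}$ be two admissible choices, and let $I_t(\Phi)v^\ast$ and $\tilde I_t(\Phi)v^\ast$ be the corresponding limits. I would compute, using the It{\^o} isometry and the orthogonality just exploited, the $L^2$-norm of the difference of the two $n$-term partial sums — but the cleanest route is to note that both sums converge in $L^2(\Omega)$ to elements whose squared norms and, more to the point, whose covariances with any $\int_0^t g(s)\,dB(s)$ are expressible through the Hilbert space inner product $[\cdot,\cdot]_H$, which is basis-independent. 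Concretely, for each fixed $s$ the vector $i_Q^\ast\Phi^\ast(s)v^\ast\in H$ is intrinsic (it does not refer to any basis), and $\scapro{\Phi(s)i_Qe_k}{v^\ast}=[e_k,i_Q^\ast\Phi^\ast(s)v^\ast]_H$; I can therefore recognise both partial sums as approximations to the same object and show $E\abs{I_t(\Phi)v^\ast-\tilde I_t(\Phi)v^\ast}^2=0$ by expanding the square into three It{\^o}-isometry terms, each of which collapses to $\int_0^t E\norm{i_Q^\ast\Phi^\ast(s)v^\ast}_H^2\,ds$ via Parseval in whichever basis is convenient, so the cross terms cancel the diagonal ones. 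The main obstacle — and the only genuinely delicate point — is the bookkeeping in this last step: one has to justify interchanging the infinite sum with the expectation and the It{\^o} isometry when the two bases are intertwined, which is handled by first working with finite partial sums, applying the isometry there (where everything is a finite sum and hence legitimate), and only then passing to the $L^2$-limit; the uniform $L^2$-bound established in the convergence step is exactly what makes this limit interchange valid.
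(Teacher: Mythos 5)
Your proposal is correct and follows essentially the same route as the paper's proof: $L^2$-convergence of the partial sums via the It\^o isometry, Parseval's identity for $h(s)=i_Q^\ast\Phi^\ast(s)v^\ast$ and condition (c) of the definition of $M_T(U,V)$, and basis-independence by expanding $E\abs{I_t(\Phi)v^\ast-\tilde I_t(\Phi)v^\ast}^2$ into three terms that each collapse to $\int_0^t E\norm{i_Q^\ast\Phi^\ast(s)v^\ast}_H^2\,ds$ using $\Cov(B_k(t),C_l(t))=t\scaproh{e_k}{f_l}$. The only cosmetic difference is that the paper runs the convergence estimate through Doob's inequality to get uniformity in $t\in[0,T]$ (used later for the martingale statement), whereas you work at fixed $t$, which suffices for the lemma as stated.
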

\begin{proof}
We begin to establish the convergence in $L^2(\Omega)$. For
that, let $m,n\in \N$ and we define for simplicity
$h(s):=i_Q^\ast\Phi^\ast(s)v^\ast$. Doob's theorem implies
 \begin{align*}
& E\abs{\sup_{0\le t\le T} \sum_{k=m+1}^{n}\int_0^t \scapro{\Phi(s)i_Q e_k}{v^\ast}\,dB_k(s)}^2\\
&\qquad \le 4 \sum_{k=m+1}^{n}\int_0^T E\scaproh{e_k}{h(s)}^2\,ds\\
&\qquad \le 4 \sum_{k=m+1}^{\infty}\int_0^T E\scaproh{\scaproh{e_k}{h(s)}e_k}{h(s)}\,ds\\
&\qquad = 4 \sum_{k=m+1}^{\infty}\sum_{l=m+1}^\infty\int_0^T E\scaproh{\scaproh{e_k}{h(s)}e_k}{\scaproh{e_l}{h(s)}e_l}\,ds\\
&\qquad =4 \int_0^T E\norm{(\Id -\pi_m)h(s)}_H^2\,ds,
 \end{align*}
where $\pi_m:H\to H$ denotes the projection onto the span of $\{e_1,\dots, e_m\}$. Because $\norm{(\Id
-\pi_m)h(s)}_H^2\to 0$ for $m\to \infty$ and
\begin{align*}
  \int_0^T E\norm{(\Id -\pi_m)h(s)}_H^2\,ds
  \le \norm{i_Q^\ast}^2_{U^\ast\to H} \int_0^T E\norm{\Phi^\ast(s,\cdot)v^\ast}^2_{U^\ast}\,ds<\infty
\end{align*}
we obtain by Lebesgue's theorem the convergence in $L^2(\Omega)$.

To prove the independence on the chosen representation of $W$ let $(f_l)_{l\in\N}$ be an
other orthonormal basis of $H$ and $w^\ast_l \in U^\ast $ such that $i_Q^\ast
w^\ast_l=f_l$ and $(C_l(t):\,t\ge 0)$ independent Wiener processes defined by
$C_l(t)=W(t)w^\ast_l$. As before we define in $L^2(\Omega)$:
\begin{align*}
  \tilde{I}_t(\Phi)v^\ast:=\sum_{l=1}^\infty \int_0^t \scapro{\Phi(s)i_Q f_l}{v^\ast}\,dC_l(s)
  \qquad\text{for all }v^\ast\in V^\ast.
\end{align*}
The relation $\Cov(B_k(t), C_l(t))=t \scaproh{i_Q^\ast u_k^\ast}{i_Q^\ast w_l^\ast}= t \scaproh{e_k}{f_l}$
enables us to calculate
\begin{align*}
& E\abs{I_t(\Phi)v^\ast-\tilde{I}_t(\Phi)v^\ast}^2\\
&\qquad= E\abs{I_t(\Phi)v^\ast}^2+ E\abs{\tilde{I}_t(\Phi)v^\ast}^2
  -2 E\left[ \big(I_t(\Phi)v^\ast\big)\big( \tilde{I}_t(\Phi)v^\ast\big)\right]\\
&\qquad= \sum_{k=1}^\infty \int_0^t E\scapro{\Phi(s)i_Qe_k}{v^\ast}^2\,ds
 + \sum_{l=1}^\infty \int_0^t E\scapro{\Phi(s)i_Qf_l}{v^\ast}^2\,ds\\
&\qquad\qquad -2 \sum_{k=1}^\infty \sum_{l=1}^\infty \int_0^t E\left[\scapro{\Phi(s)i_Q e_k}{v^\ast}
  \scapro{\Phi(s)i_Q f_l}{v^\ast} \scaproh{i_Q^\ast u_k^\ast}{i_Q^\ast w_l^\ast}\right]\,ds\\
& \qquad = 2\int_0^t E\norm{i_Q^\ast\Phi^\ast(s) v^\ast}_H^2\,ds
 -2 \int_0^t E\norm{i_Q^\ast\Phi^\ast (s) v^\ast}_H^2\,ds \\
&\qquad =0,
\end{align*}
which proves the independence of $I_t(\Phi)$ on
$(e_k)_{k\in\N}$ and $(u_k^\ast)_{k\in\N}$.

The linearity of $I_t(\Phi)$ is obvious and hence the proof is complete.
\end{proof}

Our next definition is not very surprising:
\begin{definition}
  For $\Phi\in M_T(U,V)$ we call the cylindrical random variable
  \begin{align*}
    \int_0^t \Phi(s)\, dW(s):=I_t(\Phi)
  \end{align*}
{\em cylindrical stochastic integral with respect to $W$}.
\end{definition}

Because the cylindrical stochastic integral is strongly based on the well-known
real-valued It{\^o} integral many features can be derived easily. We collect the
martingale property and It{\^o}'s isometry in the following theorem.
\begin{theorem}
Let $\Phi$ be in $M_T(U,V)$. Then we have
\begin{enumerate}
\item[{\rm (a)}] for every $v^\ast\in V^\ast$ the family
\begin{align*}
 \left(\Big(\int_0^t \Phi(s)\,dW(s)\Big)v^\ast:\,t\in [0,T]\right)
\end{align*}
forms a continuous  square-integrable martingale.
\item[{\rm (b)}] the It{\^o}'s isometry:
\begin{align*}
  E\abs{\Big(\int_0^t \Phi(s)\,dW(s)\Big)v^\ast}^2=
  \int_0^t E\norm{i_Q^\ast\Phi^\ast(s) v^\ast}_H^2\,ds.
\end{align*}
\end{enumerate}
\end{theorem}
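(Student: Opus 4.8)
The plan is to reduce both statements to the corresponding facts about the real-valued It\^o integral by fixing $v^\ast\in V^\ast$ and working with the single real-valued process $\left(I_t(\Phi)v^\ast:\,t\in[0,T]\right)$. Recall from Definition \ref{de.I_t} that
\begin{align*}
 I_t(\Phi)v^\ast = \sum_{k=1}^\infty \int_0^t \scapro{\Phi(s)i_Qe_k}{v^\ast}\,dB_k(s)
\end{align*}
with convergence in $L^2(\Omega)$, and that each summand is an ordinary It\^o integral against the real Wiener process $B_k$. Denote by $I_t^{(n)}(\Phi)v^\ast$ the $n$-th partial sum. Since the $B_k$ are independent real Wiener processes, $I_t^{(n)}(\Phi)v^\ast$ is an It\^o integral against the $n$-dimensional standard Wiener process $(B_1,\dots,B_n)$ of the row-vector integrand $s\mapsto(\scapro{\Phi(s)i_Qe_1}{v^\ast},\dots,\scapro{\Phi(s)i_Qe_n}{v^\ast})$, which is adapted and square-integrable by conditions (b) and (c) in the definition of $M_T(U,V)$ together with $\norm{i_Q^\ast}_{U^\ast\to H}<\infty$. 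Hence each partial sum process is a continuous square-integrable martingale, and the classical It\^o isometry gives
\begin{align*}
 E\abs{I_t^{(n)}(\Phi)v^\ast}^2 = \sum_{k=1}^n\int_0^t E\scapro{\Phi(s)i_Qe_k}{v^\ast}^2\,ds
 = \int_0^t E\norm{\pi_n i_Q^\ast\Phi^\ast(s)v^\ast}_H^2\,ds,
\end{align*}
where $\pi_n$ is the projection onto the span of $e_1,\dots,e_n$.

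For part (b), the isometry, I would let $n\to\infty$. The right-hand side increases to $\int_0^t E\norm{i_Q^\ast\Phi^\ast(s)v^\ast}_H^2\,ds$ by monotone convergence (or dominated convergence, using the integrable majorant $\norm{i_Q^\ast}_{U^\ast\to H}^2 E\norm{\Phi^\ast(s)v^\ast}_{U^\ast}^2$ already exhibited in the proof of Lemma \ref{le.cylintwell}), while the left-hand side converges to $E\abs{I_t(\Phi)v^\ast}^2$ because $I_t^{(n)}(\Phi)v^\ast\to I_t(\Phi)v^\ast$ in $L^2(\Omega)$, as established in Lemma \ref{le.cylintwell}, and the $L^2(\Omega)$-norm is continuous. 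This yields (b).

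For part (a), the key point is that the $L^2(\Omega)$-convergence of Definition \ref{de.I_t} can be upgraded to uniform-in-$t$ convergence, which is exactly the content of the first display in the proof of Lemma \ref{le.cylintwell}: Doob's inequality shows
\begin{align*}
 E\abs{\sup_{0\le t\le T}\big(I_t(\Phi)v^\ast - I_t^{(n)}(\Phi)v^\ast\big)}^2
 \le 4\int_0^T E\norm{(\Id-\pi_n)i_Q^\ast\Phi^\ast(s)v^\ast}_H^2\,ds \to 0.
\end{align*}
Thus $\left(I_t^{(n)}(\Phi)v^\ast:\,t\in[0,T]\right)$ converges to $\left(I_t(\Phi)v^\ast:\,t\in[0,T]\right)$ in the Banach space of continuous $L^2$-bounded martingales on $[0,T]$ (equipped with the norm $(E\sup_{t\le T}\abs{\cdot}^2)^{1/2}$), which is complete; since each partial-sum process lies in this space, so does the limit. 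Concretely: almost sure uniform convergence along a subsequence gives continuity of the paths of $I_\cdot(\Phi)v^\ast$, and the $L^2(\Omega)$-convergence at each fixed $t$ passes the martingale identity $E[I_t^{(n)}(\Phi)v^\ast\mid\F_r]=I_r^{(n)}(\Phi)v^\ast$ to the limit (conditional expectation is an $L^2$-contraction). Square-integrability is immediate from part (b). The main obstacle, such as it is, is bookkeeping the interchange of limit and $\sup_t$/conditional expectation/integral; but every estimate needed has already been produced in the proof of Lemma \ref{le.cylintwell}, so no genuinely new work is required — one simply reads those computations with the martingale and isometry conclusions in mind rather than just the existence of the limit.
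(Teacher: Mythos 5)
Your proposal is correct and follows essentially the same route as the paper: both identify $I_t(\Phi)v^\ast$ as the $L^2(\Omega)$-limit, uniform in $t\in[0,T]$ via the Doob estimate from Lemma \ref{le.cylintwell}, of the partial-sum processes (which are continuous square-integrable martingales), and both obtain the isometry by applying the real-valued It\^o isometry termwise and summing using orthogonality of the integrals against the independent $B_k$. Your write-up merely makes explicit a few steps the paper leaves implicit (passing the martingale identity through the $L^2$-limit, and the monotone/dominated convergence in the isometry), so no substantive difference.
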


\begin{proof} (a)
  In Lemma \ref{le.cylintwell} we have identified $I_t(\Phi)v^\ast$ as the limit of
\begin{align*}
  M_n(t):= \sum_{k=1}^{n}\int_0^t \scapro{\Phi(s)i_Q e_k}{v^\ast}\,dB_k(s),
\end{align*}
where the convergence takes place in $L^2(\Omega)$ uniformly on the interval $[0,T]$. As
$(M_n(t):\,t\in [0,T])$ are continuous martingales the assertion follows.

(b) Using It{\^o}'s isometry for real-valued stochastic integrals we obtain
\begin{align*}
 E\abs{\Big(\int_0^t \Phi(s)\,dW(s)\Big)v^\ast}^2
  &= \sum_{k=1}^\infty  E\left[\int_0^T \scapro{\Phi(s)i_Q e_k}{v^\ast}\,dB_k(s)\right]^2\\
 &=\sum_{k=1}^\infty \int_0^T  E\scaproh{e_k}{i_Q^\ast \Phi^\ast (s) v^\ast}^2\,ds\\
 &= \int_0^T E\norm{i_Q^\ast\Phi^\ast (s) v^\ast}_H^2\, ds.
\end{align*}
\end{proof}

An obvious question is under which conditions the cylindrical integral is induced by a
$V$-valued random variable. The answer to this question will also allow us to relate the
cylindrical integral with other known definitions of stochastic integrals in infinite
dimensional spaces.

From our point of view the following corollary is an obvious consequence. We call an
stochastic process $\Phi\in M_T(U,V)$ non-random if it does not depend on
$\omega\in\Omega$.
\begin{corollary}\label{co.intind}
For non-random $\Phi\in M_T(U,V)$ the following are equivalent:
\begin{enumerate}
\item[{\rm (a)}] $\displaystyle \int_0^T \Phi(s)\,dW(s)$ is induced by a $V$-valued random variable;
\item[{\rm (b)}] there exists a Gaussian measure $\mu$ on $V$ with covariance operator $R$ such that:
\begin{align*}
  \int_0^T \norm{i_Q^\ast \Phi^\ast (s)v^\ast}_H^2\,ds =\scapro{Rv^\ast}{v^\ast}
  \qquad\text{for all }v^\ast\in V^\ast.
\end{align*}
\end{enumerate}
\end{corollary}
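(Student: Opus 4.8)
The plan is to identify the cylindrical distribution of $\int_0^T\Phi(s)\,dW(s)=I_T(\Phi)$ explicitly, to notice that it is a centred weakly Gaussian cylindrical measure, and then to read off both implications from the uniqueness of cylindrical measures through their characteristic functions together with the reduction of ``being induced by a $V$-valued random variable'' to ``extending to a Radon measure'' (\cite[Thm.~VI.3.1]{Vaketal}) recalled earlier. First I would observe that, because $\Phi$ is non-random, for every $v^\ast\in V^\ast$ the real random variable $I_T(\Phi)v^\ast$ is centred Gaussian: by Lemma~\ref{le.cylintwell} it is the limit in $L^2(\Omega)$ of the partial sums $\sum_{k=1}^n\int_0^T\scapro{\Phi(s)i_Qe_k}{v^\ast}\,dB_k(s)$, each of which is a finite sum of independent Wiener integrals of deterministic integrands, hence a centred Gaussian variable, and $L^2$-limits of centred Gaussian variables are centred Gaussian. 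By the It\^o isometry established in the theorem above (using once more that $\Phi$ is non-random) its variance equals $\int_0^T\norm{i_Q^\ast\Phi^\ast(s)v^\ast}_H^2\,ds$, so $I_T(\Phi)$ is a weakly Gaussian cylindrical random variable with
\begin{align*}
  \phi_{I_T(\Phi)}(v^\ast)&=E\big[\exp\big(iI_T(\Phi)v^\ast\big)\big]\\
  &=\exp\Big(-\tfrac12\int_0^T\norm{i_Q^\ast\Phi^\ast(s)v^\ast}_H^2\,ds\Big),\qquad v^\ast\in V^\ast.
\end{align*}
Denoting its cylindrical distribution by $\lambda$, statement~(a) is, by \cite[Thm.~VI.3.1]{Vaketal}, equivalent to $\lambda$ extending to a Radon probability measure on $\Borel(V)$.

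For (a)$\,\Rightarrow\,$(b) suppose $\lambda$ extends to a Radon measure $\mu$ on $\Borel(V)$. For every $v^\ast$ the image $\mu_{v^\ast}$ coincides with $\lambda_{v^\ast}$, the law of $I_T(\Phi)v^\ast$, which is centred Gaussian; hence $\mu$ is a centred Gaussian measure on $V$. A Gaussian measure has weak order $2$, so by \cite[Thm.~III.2.1]{Vaketal} its covariance operator $R$ is $V$-valued, $\mu$ is a Gaussian measure on $V$ with covariance operator $R$, and $\phi_\mu(v^\ast)=\exp(-\tfrac12\scapro{Rv^\ast}{v^\ast})$. Equating this with the displayed expression for $\phi_{I_T(\Phi)}=\phi_\lambda=\phi_\mu$ gives $\scapro{Rv^\ast}{v^\ast}=\int_0^T\norm{i_Q^\ast\Phi^\ast(s)v^\ast}_H^2\,ds$ for all $v^\ast\in V^\ast$, which is (b).

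For (b)$\,\Rightarrow\,$(a), let $\mu$ be a Gaussian measure on $V$ with covariance operator $R$ satisfying the identity in (b). Its mean $m_\mu$ is an element of $V$, so after replacing $\mu$ by its image under the translation $v\mapsto v-m_\mu$ we may assume $\mu$ centred, which does not change $R$; then $\phi_\mu(v^\ast)=\exp(-\tfrac12\scapro{Rv^\ast}{v^\ast})=\exp(-\tfrac12\int_0^T\norm{i_Q^\ast\Phi^\ast(s)v^\ast}_H^2\,ds)=\phi_{I_T(\Phi)}(v^\ast)$ by hypothesis and the first paragraph. Since a cylindrical measure is uniquely determined by its characteristic function, $\lambda$ is the restriction of $\mu$ to $\Cc(V)$, that is, $\lambda$ extends to the Radon measure $\mu$; by the first paragraph $I_T(\Phi)$ is therefore induced by a $V$-valued random variable. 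I do not expect a genuine obstacle here, since everything is assembled from results already available; the only points that need a little care are that $I_T(\Phi)v^\ast$ is truly Gaussian rather than merely of the correct second moment (which is where the non-randomness of $\Phi$ enters), and the possible presence of a mean in the measure $\mu$ from (b), which is removed by the translation above.
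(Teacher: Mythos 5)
Your proposal is correct and follows essentially the same route as the paper: identify the cylindrical distribution of $I_T(\Phi)$ as centred weakly Gaussian with variance given by It\^o's isometry, and then translate ``induced by a $V$-valued random variable'' into the extension of that cylindrical measure to a Gaussian measure with matching covariance operator. You merely spell out two points the paper leaves implicit, namely that the Gaussianity of $I_T(\Phi)v^\ast$ rests on $\Phi$ being non-random and that a possible mean of $\mu$ in (b) can be translated away.
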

\begin{proof}
(a) $\Rightarrow$ (b): If the integral $I_T(\Phi)$ is induced by a $V$-valued random
variable then the random variable is centred Gaussian, say with a covariance operator
$R$. Then It{\^o}'s isometry yields
\begin{align*}
  \scapro{Rv^\ast}{v^\ast}
=E\abs{I_T(\Phi)v^\ast}^2 = \int_0^T \norm{i_Q^\ast\Phi^\ast (s) v^\ast}_H^2\, ds.
\end{align*}

(b)$\Rightarrow $(a): Again It{\^o}'s isometry  shows that the weakly Gausian cylindrical
distribution of $I_T(\Phi)$ has covariance operator $R$ and thus, extends to a Gaussian
measure on $V$.
\end{proof}

The condition (b) of Corollary \ref{co.intind} is derived in van Neerven and Weis
\cite{NeeWeis} as a sufficient and necessary condition for the existence of the
stochastic Pettis integral introduced in this work. Consequently, it is easy to see that
under the equivalent conditions (a) or (b) the cylindrical integral coincides with this
stochastic Pettis integral. Further relation of condition (b) to $\gamma$-radonifying
properties of the integrand $\Phi$ can also be found in  \cite{NeeWeis}.

Our next result relates the cylindrical integral to the stochastic integral in Hilbert
spaces as introduced in Da Prato and Zabczyk \cite{DaPrato92}. For that purpose, we
assume that $U$ and $V$ are separable Hilbert spaces. Let $W$ be a strongly cylindrical
Wiener process in $U$ and let the inclusion mapping $i_Q:H_Q\to U$ be Hilbert-Schmidt.
Then there exist an orthonormal basis $(f_k)_{k\in \N}$ in $U$ and real numbers
$\lambda_k\ge 0$ such that $Qf_k=\lambda_kf_k$ for all $k\in \N$. For the following we
can assume that $\lambda_k\neq 0$ for all $k\in\N$. By defining
$e_k:=\sqrt{\lambda_k}f_k$ for all $k\in \N$ we obtain an orthonormal basis of $H_Q$ and
$W$ can be represented as usual as a sum with respect to this orthonormal basis.

Our assumption on $i_Q$ to be Hilbert-Schmidt is not a restriction because in general the
integral with respect to a strongly cylindrical Wiener  process is defined in
\cite{DaPrato92} by extending $U$ such that $i_Q$ becomes Hilbert-Schmidt.

\begin{corollary}
 Let $W$ be a strongly cylindrical Wiener process in a separable Hilbert space $U$ with $i_Q:H_Q\to U$ Hilbert-Schmidt.
 If $V$ is a separable Hilbert space and $\Phi\in M_T(U,V)$ is such that
 \begin{align*}
   \sum_{k=1}^\infty \lambda_k\ \int_0^T E\norm{\Phi(s)i_Qe_k}^2_V\,ds <\infty,
 \end{align*}
then the cylindrical integral
\begin{align*}
  \int_0^T \Phi(s)\,dW(s)
\end{align*}
is induced by a $V$-valued random variable. This random variable is the standard
stochastic integral in Hilbert spaces of $\Phi$ with respect to $W$.
\end{corollary}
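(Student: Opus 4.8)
The plan is to show that, under the integrability hypothesis on $\Phi$ — which now allows $\Phi$ to be random, so the Gaussian extension argument of Corollary~\ref{co.intind} is not directly available — the partial sums appearing in Definition~\ref{de.I_t} already converge in $L^2(\Omega;V)$, not merely cylindrically in the sense of Lemma~\ref{le.cylintwell}, and then to recognise the $V$-valued limit as the Hilbert-space stochastic integral of \cite{DaPrato92}. For the set-up, recall that since $i_Q$ is Hilbert--Schmidt, Corollary~\ref{th.gammahilbert} and Theorem~\ref{th.cyl=radon} show that $W$ is induced by a genuine $U$-valued $Q$-Wiener process with $Q$ of trace class, and that, with $(f_k)$, $(\lambda_k)$, $e_k=\sqrt{\lambda_k}f_k$ and $i_Qe_k=\sqrt{\lambda_k}f_k$ fixed as before the corollary, one has $W(t)=\sum_k i_Qe_k\,B_k(t)$ in $L^2(\Omega;U)$ with $B_k(t)=W(t)u_k^\ast$ (Theorems~\ref{th.cylsum} and~\ref{th.Usum}). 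Each process $s\mapsto\Phi(s)i_Qe_k$ is $V$-valued, adapted and square-integrable on $[0,T]\times\Omega$ by hypothesis, so the $V$-valued It\^o integrals $\int_0^t\Phi(s)i_Qe_k\,dB_k(s)$ are defined.

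The first, and essentially only substantial, step is the convergence of $Z(t):=\sum_{k\ge1}\int_0^t\Phi(s)i_Qe_k\,dB_k(s)$ in $L^2(\Omega;V)$. Here I would use that for $k\neq l$ the $V$-valued martingales $\int_0^{\cdot}\Phi(s)i_Qe_k\,dB_k(s)$ and $\int_0^{\cdot}\Phi(s)i_Qe_l\,dB_l(s)$ have vanishing covariation because $[B_k,B_l]\equiv0$, so that It\^o's isometry in the Hilbert space $V$ gives, for $m<n$,
\begin{align*}
E\norm{\sum_{k=m+1}^n\int_0^t\Phi(s)i_Qe_k\,dB_k(s)}_V^2=\sum_{k=m+1}^n\int_0^t E\norm{\Phi(s)i_Qe_k}_V^2\,ds,
\end{align*}
and the right-hand side tends to $0$ as $m,n\to\infty$ precisely by the assumed summability; hence $Z(t)$ exists in $L^2(\Omega;V)$. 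Since taking $\scapro{\cdot}{v^\ast}$ for $v^\ast\in V^\ast$ commutes both with the $L^2(\Omega;V)$-limit and with each scalar It\^o integral, we get $\scapro{Z(t)}{v^\ast}=\sum_{k\ge1}\int_0^t\scapro{\Phi(s)i_Qe_k}{v^\ast}\,dB_k(s)=I_t(\Phi)v^\ast$ almost surely for every $v^\ast$; that is, the cylindrical integral $\int_0^t\Phi(s)\,dW(s)=I_t(\Phi)$ is induced by $Z(t)$, which proves the first assertion.

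It remains to identify $Z(T)$ with $\int_0^T\Phi(s)\,dW(s)$ in the sense of \cite{DaPrato92}. Since $Q$ is trace class, $W$ is a bona fide $Q$-Wiener process and that integral is defined for exactly the class of integrands satisfying the present hypothesis (which says that $s\mapsto\Phi(s)i_Q$ is square-integrable as an $L_2(H_Q,V)$-valued process); it arises as the $L^2(\Omega;V)$-limit of the integrals of elementary processes and obeys It\^o's isometry. For an elementary $\Phi$, a direct computation — inserting the series representation of $W$ and using $[B_k,B_l]\equiv0$ — shows its Da Prato--Zabczyk integral equals $\sum_k\int_0^T\Phi(s)i_Qe_k\,dB_k(s)=Z(T)$. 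As elementary processes are dense in the admissibility norm and both $\Phi\mapsto Z(T)$ and $\Phi\mapsto\int_0^T\Phi\,dW$ are isometries with respect to it, the two maps coincide for all admissible $\Phi$, which is the claim. I expect this last identification to be the only real obstacle: it amounts to matching the abstract density-based construction of \cite{DaPrato92} with the explicit series $Z(T)$, and this is routine once agreement on elementary integrands and the common isometry are in hand.
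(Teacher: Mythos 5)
Your argument is correct in substance but takes a genuinely different route from the paper's. The paper also begins by noting that $W$ is induced by a $U$-valued $Q$-Wiener process $Y$, but it does not construct the $V$-valued limit by hand: it forms the truncated processes $Y_N(t)=\sum_{k\le N}i_Qe_kB_k(t)$, invokes the convergence result of \cite[Ch.4.3.2]{DaPrato92} to get $\Phi\circ Y_N(T)\to\Phi\circ Y(T)$ in $L^2(\Omega;V)$, identifies $\scapro{\Phi\circ Y_N(T)}{v^\ast}$ with the $N$-th partial sum of the cylindrical integral (checking this first for simple $\Phi$), and passes to the limit along an a.s.\ convergent subsequence. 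You instead prove directly that the partial sums $\sum_{k\le n}\int_0^T\Phi(s)i_Qe_k\,dB_k(s)$ are Cauchy in $L^2(\Omega;V)$ via orthogonality of the summands and the $V$-valued It\^o isometry, which produces the inducing random variable $Z(T)$ without any appeal to the Da Prato--Zabczyk approximation theorem; the price is that you then need a separate elementary-integrand/isometry/density step to recognise $Z(T)$ as the Da Prato--Zabczyk integral, a step the paper gets essentially for free because its approximants are Da Prato--Zabczyk integrals from the outset. Your version is more self-contained for the existence of the inducing $V$-valued variable and avoids the subsequence extraction; the paper's version outsources the convergence to the cited monograph.

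One point you must make explicit: your Cauchy estimate requires $\sum_k\int_0^TE\norm{\Phi(s)i_Qe_k}_V^2\,ds<\infty$, i.e.\ square-integrability of $\Phi i_Q$ as an $L_2(H_Q,V)$-valued process, whereas the hypothesis as printed carries an extra factor $\lambda_k$. Since $i_Qe_k=\sqrt{\lambda_k}f_k$ and $\lambda_k\to0$, the printed condition is strictly weaker than the unweighted one, and your estimate would not close under it as literally stated. The paper's own proof relies on exactly the same unweighted condition (it is the Da Prato--Zabczyk admissibility condition for integrands against $Y$), so the discrepancy appears to lie in the statement rather than in your argument; still, you silently replace the stated hypothesis by the one you actually use, and this should be flagged.
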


\begin{proof}
By Theorem \ref{th.cyl=radon} the cylindrical Wiener process $W$ is induced by an $U$-valued Wiener process $Y$.  We define $U$-valued Wiener processes
$(Y_N(t):\,t\in [0,T])$ by
  \begin{align*}
    Y_N(t)=\sum_{k=1}^N i_Q e_k B_k(t).
  \end{align*}
Theorem \ref{th.Usum} implies that $Y_N(t)$ converges to $Y$ in $L^2(\Omega;U)$. By our
assumption on $\Phi$ the stochastic integrals $\Phi\circ Y_N(T)$ in the sense of Da Prate
and Zabczyk \cite{DaPrato92} exist and converge to the stochastic integral $\Phi\circ
Y(T)$ in $L^2(\Omega;V)$, see \cite[Ch.4.3.2]{DaPrato92}.

On the other hand, by first considering simple functions $\Phi$ and then extending to the
general case we obtain
\begin{align*}
  \scapro{\Phi\circ Y_N(T)}{v^\ast}=\sum_{k=1}^N \int_0^t \scapro{\Phi(s)i_Qe_k}{v^\ast}\, dB_k(s)
\end{align*}
for all $v^\ast\in V^\ast$. By Definition \ref{de.I_t} the right hand side converges in
$L^2(\Omega)$ to
\begin{align*}
\left(\int_0^T \Phi(s)\,dW(s)\right)v^\ast,
\end{align*}
whereas at least a subsequence of $(\scapro{\Phi\circ Y_N(T)}{v^\ast})_{N\in\N}$
converges to $\scapro{\Phi\circ Y(T)}{v^\ast}$ $P$-a.s..
\end{proof}

Based on the cylindrical integral one can build up a whole theory of {\em cylindrical
stochastic differential equations}. Of course, a solution will be in general a
cylindrical process but there is no need to put geometric constrains on the state space
under consideration. If one is interested in classical stochastic processes as solutions
for some reasons one can tackle this problem as in our two last results by deriving
sufficient conditions guaranteeing that the cylindrical solution is induced by a
$V$-valued random process.

\end{document}